\numberwithin{equation}{section}
\newtheorem{thm}[equation]{Theorem}
\newtheorem{prop}[equation]{Proposition}
\newtheorem{cor}[equation]{Corollary}
\newtheorem{lem}[equation]{Lemma}
\theoremstyle{definition}
\newtheorem{ex}[equation]{Example}
\newtheorem{rem}[equation]{Remark}
\renewcommand{\dim}{\operatorname{\mathsf{dim}}}
\renewcommand{\deg}{\operatorname{\mathsf{deg}}}
\newcommand\ind{\operatorname{\mathsf{ind}}}
\renewcommand\exp{\operatorname{\mathsf{exp}}}
\newcommand\op{\operatorname{\mathsf{op}}}
\newcommand\End{\operatorname{\mathsf{End}}}
\newcommand\Br{\operatorname{\mathsf{Br}}}
\newcommand\id{\operatorname{\mathsf{id}}}
\newcommand\Int{\operatorname{\mathsf{Int}}}
\newcommand\corr{\operatorname{\mathsf{cor}}}
\newcommand{\cchar}{\mathsf{char}}
\newcommand{\can}{\operatorname{\mathsf{can}}}
\newcommand{\mg}[1]{{#1}^{\times}}
\newcommand{\s}{\sigma}
\newcommand{\nat}{\mathbb{N}}
\renewcommand{\leq}{\leqslant}
\renewcommand{\geq}{\geqslant}
\renewcommand{\setminus}{\smallsetminus}
\newcommand\Ad{\operatorname{\mathsf{Ad}}}
\newcommand\ad{\operatorname{\mathsf{ad}}}
\newcommand\sw{\operatorname{\mathsf{sw}}}
\newcommand\rk{\operatorname{\mathsf{rk}}}
\newcommand{\C}{\mathsf{C}}
\newcommand{\Z}{\mathsf{Z}}
\renewcommand{\setminus}{\smallsetminus}
\renewcommand{\sup}{\mathsf{sup}}
\newcommand{\I}{\mathsf{I}}
\newcommand{\Iq}[2]{\I_{\mathsf{q}}^{#1}{#2}}
\newcommand{\q}{\mathsf{q}}
\newcommand{\sfrac}[2]{\mbox{$\frac{#1}{#2}$}}
\newcommand{\qq}{\mathbb{Q}}
\begin{document}
\title{Hermitian $u$-invariants under quadratic field extensions}
	
\date{January 13, 2025}
	
\author{Karim Johannes Becher}
\author{Fatma Kader B\.{i}ng\"{o}l}
		
\address{Universiteit Antwerpen, Departement Wiskunde, Middelheim\-laan~1, 2020 Ant\-werpen, Belgium.}
\email{karimjohannes.becher@uantwerpen.be}

\address{Scuola Normale Superiore, Piazza dei Cavalieri 7, 56126 Pisa, Italia}
\email{fatmakader.bingol@sns.it}
	
\begin{abstract}
    The hermitian $u$-invariants of a central simple algebra with involution are studied. In this context, a new technique is obtained to give bounds for the behavior of these invariants under a quadratic field extension.
    This is applied to obtain bounds in terms of the index and the $u$-invariant of the base field.

\medskip\noindent
{\sc{Keywords:}} central simple algebra, involution, hermitian form, $u$-invariant
 
\medskip\noindent
{\sc Classification} (MSC 2020): 11E39, 12E15, 16K20
\end{abstract}
\maketitle
	
\section{Introduction}
The concept of the $u$-invariant naturally extends from quadratic forms over fields to hermitian forms over central simple algebras with an involution.
This paper is devoted to the problem of bounding the hermitian $u$-invariants of central simple algebras of exponent $2$.
Recall that such algebras can be represented as a tensor product of a finite number of quaternion algebras.
In the presence of a suitable separable quadratic subfield, one can bound the hermitian $u$-invariant of a central simple algebra with involution in terms of the hermitian $u$-invariant of a subalgebra stable under the involution.
This relies on a method used in characteristic different from $2$ by E. Bayer-Fluckiger's and R. Parimala in the construction of an exact sequence of Witt groups of hermitian forms \cite[Appendix 2 and \S 3.1]{BFP95}, which plays an essential role in their classification results for hermitian forms over central simple algebras with involution over fields of cohomological dimension $2$.
It is used later in \cite{Mah05}, \cite{PS13} and \cite{Wu18} to obtain upper bounds for the hermitian $u$-invariants of a central simple algebra of exponent $2$ in terms of the $u$-invariant of the base field. 
In this article, we refine this method and apply it to study the behaviour of the hermitian $u$-invariants under $2$-extensions and in particular multiquadratic extensions. 
In this way, we obtain bounds on the hermitian $u$-invariants of a central simple algebra with involution in terms of the degree and the $u$-invariant of a splitting $2$-extension (\Cref{2-ext-unitary-hermit} and \Cref{orthogonal-hermit-split-multi}). 
For certain algebras with involution of small index, we obtain in \Cref{C:unitary-biquat} and \Cref{ort-hermit-ind8-wu+our} improvements to previously existing bounds.

We denote by $\nat$ the set of natural numbers including $0$ and set $\nat^+=\nat\setminus\{0\}$.

\section{Hermitian forms and involutions} 

Our main references are \cite{Knu91} for the theory of hermitian forms and \cite{KMRT98} for the theory of algebras with involution.
The notions and basic facts from the theory of central simple algebras that we need are mostly covered by \cite[Chap.~8]{Scha85}.

Throughout this article let $F$ denote a field. Let $A$ be a central simple $F$-algebra.
The \emph{degree},  \emph{index} and \emph{exponent of $A$} are denoted by $\deg A$, $\ind A$ and $\exp A$, respectively. 
Given another central simple $F$-algebra $B$, we write $A\sim B$ to indicate that $A$ and $B$  are Brauer equivalent.
By Wedderburn's Theorem \cite[Chap.~8, Cor. 1.6]{Scha85}, we have  $A\simeq\mathbb{M}_s(D)$ for a unique central $F$-division algebra $D$ and a unique $s\in\nat^+$.
Then $\ind{A}=\deg{D}$ and $\deg{A}=s\cdot \ind{A}$. We say that $A$ is \emph{split} if $D=F$, or equivalently, if $\ind A=1$.
By \cite[Chap.~8, Theorem 1.8]{Scha85}, every finitely generated $A$-right module $V$ decomposes into a direct sum of simple $A$-right modules. 
The number of simple components in a decomposition of $V$ as a direct sum of simple $A$-right modules is called the \emph{rank of} $V$ (over $A$) and denoted by $\rk_A V$.
It follows that $\rk_A V=\frac{\dim_FV}{s\cdot\dim_FD}$.
In particular, we have $\rk_A A=s$, and if $A$ is a division algebra, then $\rk_A V$ is the dimension of $V$ as an $A$-right vector space.
Given a field extension $K/F$, we obtain from $A$ a central simple $K$-algebra $A_K=A\otimes_FK$.

An \emph{involution on} a ring $A$ is an anti-auto\-morphism $\sigma: A\to A$ such that $\sigma^2=\id_A$.
If $A$ is an $F$-algebra, $\s$ is an involution on $A$,  and $K/F$ is a field extension, then $\s\otimes \id_K$ is an involution on $A_K=A\otimes_FK$, which we denote by $\sigma_K$.
If $K/F$ is a quadratic \'etale extension (that is, either $K\simeq F\times F$ or $K/F$ is a separable quadratic field extension), then the non-trivial 
$F$-automorphism of $K$ is an involution on $K$, which we call  the \emph{canonical involution of $K/F$} and which we denote by $\can_{K/F}$.

Let now $A$ be a central simple $F$-algebra and $\sigma$ an involution on $A$ which is $F$-linear.
We fix a field extension $K/F$ such that $A_K$ is split.
Hence $A_K\simeq\End_KV$ for a $K$-vector space $V$ with $\dim_K V=\deg A$, and under this isomorphism of $K$-algebras, $\s_K$ corresponds to a $K$-linear involution on $\End_KV$, which is adjoint to a (nonsingular) alternating or symmetric $K$-bilinear form $b$ on $V$. 
Moreover, whether $b$ is alternating depends neither on the choice of the field extension $K/F$ nor on the isomorphism; see \cite[Prop.~2.6]{KMRT98}.
We call the involution $\s$ \emph{symplectic} if the bilinear form $b$ is alternating, and we call it \emph{orthogonal} otherwise.

For a ring $A$, we set $\Z(A)=\{x\in A\mid xy=yx\mbox{ for all }y\in A\}$, which is a subring called the \emph{center of $A$}.
By an \emph{$F$-algebra with involution}, we mean a pair $(A,\s)$ where $A$ is a finite-dimensional $F$-algebra and $\s$ is an $F$-linear involution on $A$ such that $F=\{x\in \Z(A)\mid \s(x)=x\}$ and $A$ has no non-trivial two-sided ideal $I$ with $\s(I)= I$. 
There are two kinds of situations for this to occur:
\begin{enumerate}
    \item $A$ is a central simple $F$-algebra and $\s$ is an $F$-linear involution. 
    \item $K=\Z(A)$ is a quadratic \'etale extension of $A$ and $\s|_K$ is an automorphism of order $2$ of $K$ with $F$ as its fixed field. 
    In this case, either $K/F$ is a separable quadratic field extension and $A$ is a central simple $K$-algebra, or $K\simeq F\times F$ and $A\simeq B\times B^{\op}$ for a central simple $F$-algebra $B$ and where $B^{\op}$ denotes its opposite algebra (which coincides with $B$ as a set), and $\s$ corresponds to the so-called \emph{switch map} $\sw_B$ on $B\times B^{\op}$, which is defined by $\sw_B(b,b')=(b',b)$ for $b,b'\in B$.
\end{enumerate}

If $K$ is either a field or a product of two copies of a field, then an involution $\s$ on a $K$-algebra $A$ is called \emph{unitary} if $\s|_K\neq \id_K$, and this case $(A,\s)$ is an $F$-algebra with involution for $F=\{x\in K\mid \s(x)=x\}$. 
We then also call $\s$ a \emph{$K/F$-unitary involution on $A$}.
Note that, while the fixed field $F$ in this situation is determined by $\s$, a central simple $K$-algebra $A$ can have many unitary involutions $\s$ with different fixed fields for $\s$, and this situation will also occur crucially in our study.

A unitary involution $\s$ on an $F$-algebra $A$ with $\Z(A)\simeq F\times F$ is also called \emph{unitary of inner type}. 
(The term is motivated by a relation to a corresponding notion for linear algebraic groups.)
For any $F$-algebra with involution $(A,\s)$ where $\s$ is not unitary of inner type, we have that $K=\Z(A)$ is a field, $K/F$ is separable with $[K:F]\leq 2$ and $A$ is a central simple $K$-algebra.

We recall the following criteria for the existence of involutions on a central simple algebra. 

\begin{thm}[Albert]\label{T:1st-ex-Albert}
    Let $A$ be a central simple $F$-algebra. There exists an orthogonal involution on $A$ if and only if $\exp A\leq 2$.
    There exists a symplectic involution on $A$ if and only if $\exp A\leq 2$ and $\deg A$ is even. 
\end{thm}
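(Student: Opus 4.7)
The plan is to establish both equivalences by first recognising that an $F$-linear involution of the first kind on $A$ is the same as an $F$-algebra isomorphism $A \to A^{\op}$, and then isolating the type via scalar extension to a splitting field.

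For the easy directions, if $A$ admits some $F$-linear involution $\sigma$, then $\sigma$ yields $[A]=[A^{\op}]=-[A]$ in $\Br(F)$, forcing $\exp A\leq 2$. If in addition $\sigma$ is symplectic, then by the definition adopted in the paper, $\sigma_K$ is adjoint to a nonsingular alternating bilinear form on a $K$-vector space $V$ of dimension $\deg A$ for some splitting field $K/F$; such forms exist only on even-dimensional spaces, so $\deg A$ must be even.

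For the existence of at least one involution when $\exp A \leq 2$, I would fix an isomorphism $\varphi: A \to A^{\op}$, regard it as an anti-automorphism of $A$, and apply the Skolem--Noether theorem to write $\varphi^{2}=\Int(u)$ for some $u \in A^{\times}$. Computing $\varphi^{3}$ in two ways shows that $\varphi(u)\,u$ commutes with every element of the form $\varphi(x)$ and hence lies in $F^{\times}$; after rescaling $\varphi$ by a suitable inner automorphism one arrives at a bona fide involution $\sigma_0$ on $A$.

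To realise both types of involution when $\deg A$ is even, I would use that every further involution of the first kind on $A$ has the form $\Int(v) \circ \sigma_0$ with $\sigma_0(v)=\pm v$, and that, in characteristic different from $2$, changing the sign of $v$ toggles the type; an invertible skew-symmetric element is then produced by a dimension count in $\Skew(A,\sigma_0)$. The main obstacle is characteristic $2$, where $\Sym(A,\sigma_0)$ and $\Skew(A,\sigma_0)$ no longer decompose $A$ and the orthogonal/symplectic distinction must be detected through a subtler invariant such as the Pfaffian trace; in that setting one argues type-preservation under Galois descent from a splitting field, as carried out in \cite[Ch.~I, \S 2]{KMRT98}.
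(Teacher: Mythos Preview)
Your argument is correct and follows the same path as the paper, which simply cites \cite[Theorem~3.1]{KMRT98} for the existence of an $F$-linear involution when $\exp A\leq 2$ and \cite[Cor.~2.8]{KMRT98} for the separation into orthogonal and symplectic types; what you have written is essentially an outline of the proofs of those two results. The only step that deserves tightening is the production of an invertible element of $\Skew(A,\sigma_0)$: a bare dimension count does not by itself guarantee invertibility, so one usually either passes to a splitting field (where an invertible alternating matrix visibly exists once $\deg A$ is even) and invokes that non-invertibility is a Zariski-closed condition on $\Skew(A,\sigma_0)$, or argues directly through the division algebra with involution underlying $A$, which is precisely what \cite[Cor.~2.8]{KMRT98} does.
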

\begin{proof}
    By \cite[Theorem 3.1]{KMRT98}, an $F$-linear involution on $A$ exists if and only if $\exp A\leq 2$, and using \cite[Cor. 2.8]{KMRT98}, the statement follows.
\end{proof}

For a separable quadratic field extension $K/F$ and a central simple $K$-algebra $B$,
we denote by $\corr_{K/F}B$ the corestriction algebra of $B$ with respect to $K/F$ as defined in \cite[\S 8]{Dra83}.

\begin{thm}[Albert-Riehm-Scharlau]\label{T:ARS}
\label{exist-inv}
    Let $K/F$ be a separable quadratic field extension and let B be a central simple $K$-algebra. 
    There exists a $K/F$-unitary involution on $B$ if and only if $\corr_{K/F}B$ is split.
\end{thm}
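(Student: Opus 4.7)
The plan is to use the canonical base-change identification
\[
\corr_{K/F}(B)\otimes_F K \;\cong\; B\otimes_K {}^{\sigma}B
\]
of $K$-algebras, where $\sigma$ generates $\Gal(K/F)$ and ${}^{\sigma}B$ denotes the $\sigma$-twist of $B$ (same underlying ring, with scalar action twisted by $\sigma$). This identification is Galois-equivariant when the right-hand side is equipped with the $\sigma$-semilinear switch map $b_1\otimes b_2\mapsto b_2\otimes b_1$, and $\corr_{K/F}(B)$ is recovered as the $F$-subalgebra of fixed points. In both directions the real work is to match the $K/F$-unitary datum on $B$ with this descent datum.

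For the forward direction, let $\tau$ be a $K/F$-unitary involution on $B$. Since $\tau(\lambda b)=\sigma(\lambda)\tau(b)$ for $\lambda\in K$ and $\tau(bb')=\tau(b')\tau(b)$, I would reinterpret $\tau$ as a $K$-algebra isomorphism ${}^{\sigma}B\xrightarrow{\sim}B^{\op}$. Tensoring over $K$ with $B$ then yields
\[
B\otimes_K{}^{\sigma}B \;\cong\; B\otimes_K B^{\op} \;\cong\; \End_K(B),
\]
which already shows that $\corr_{K/F}(B)\otimes_F K$ is split and hence that $\corr_{K/F}(B)$ has index dividing $2$. To promote this to full splitting over $F$, I would verify that under this chain of isomorphisms the switch descent datum corresponds to the adjoint of an honest $F$-valued bilinear form on $B$ constructed from $\tau$ and the reduced trace, for instance $(x,y)\mapsto \Trd_{B/K}(\tau(x)y)$ composed with the trace of $K/F$; the existence of such an $F$-form exhibits $\corr_{K/F}(B)$ as an endomorphism algebra of an $F$-vector space of the correct dimension, hence as split.

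For the reverse direction, suppose $\corr_{K/F}(B)$ is split. Then $B\otimes_K{}^{\sigma}B\cong \End_K(W)$ for some $K$-vector space $W$, and the $\sigma$-semilinear switch on the left induces a $\sigma$-semilinear involution on $\End_K(W)$. Unpacking the $(B,{}^{\sigma}B)$-bimodule structure on $W$, I would extract from this a $\sigma$-semilinear ring anti-automorphism $\tau$ of $B$. By Skolem--Noether $\tau$ is determined only up to an inner twist $\Int(c)$ with $c\in K^{\times}$, and the condition $\tau^2=\id$ reduces to a relation of the form $c\sigma(c)\in F^{\times 2}$, which can be enforced via a Hilbert 90 / norm argument applied to $K/F$.

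The main obstacle, shared by both directions, is to deal with the descent data rather than merely the underlying split $K$-algebras. In the forward direction this amounts to producing an explicit $F$-bilinear form realizing the switch as an adjoint involution; in the reverse direction it requires a Skolem--Noether adjustment followed by a Hilbert 90 argument to ensure that the $\sigma$-semilinear anti-automorphism extracted from the descent squares to the identity.
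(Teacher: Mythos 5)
The paper does not actually prove this theorem; it simply cites \cite[Theorem~3.1]{KMRT98}, so your proposal has to be judged as a proof from scratch. Your forward direction is essentially the standard (Albert) argument and is sound in outline: a $K/F$-unitary involution $\tau$ yields a $K$-isomorphism ${}^{\sigma}B\simeq B^{\op}$, hence $\corr_{K/F}(B)\otimes_FK\simeq B\otimes_K{}^{\sigma}B\simeq\End_K(B)$ via $b_1\otimes b_2\mapsto(x\mapsto b_1x\tau(b_2))$, and under this map the switch corresponds to conjugation by the $\sigma$-semilinear map $\tau$ itself; the fixed algebra is then $\End_F(\{x\in B\mid\tau(x)=x\})$, which is split. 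Your trace-form variant realizes the same descent and works.

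The reverse direction, however, has a genuine gap, and it sits exactly where you wave at ``a Hilbert 90 / norm argument.'' From ${}^{\sigma}B\simeq B^{\op}$ (equivalently, from $\corr_{K/F}(B)$ being split merely \emph{over $K$}) one gets a $\sigma$-semilinear anti-automorphism $\tau_0$ of $B$; by Skolem--Noether its square is $\Int(u)$ for some $u\in\mg{B}$ (not $u\in\mg{K}$: the ambiguity in $\tau_0$ is an inner automorphism of $B$, not of $K$), and one computes $u\tau_0(u)=\lambda\in\mg{F}$. The obstruction to adjusting $\tau_0$ into an involution is the class of $\lambda$ in $\mg{F}/\N_{K/F}(\mg{K})$, and this class is in general nonzero: Hilbert 90 only parametrizes elements of norm $1$, it does not make an arbitrary $\lambda\in\mg{F}$ a norm. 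The substantive content of the Riehm--Scharlau direction is precisely the identification of this obstruction with the Brauer class of $\corr_{K/F}(B)$ (via the quaternion algebra $(K/F,\lambda)$), so that the hypothesis ``$\corr_{K/F}(B)$ split over $F$, not just over $K$'' is what kills $\lambda$. Your sketch never uses that stronger hypothesis in an essential way, and as written it would prove too much: for $F$ a $p$-adic field, $K/F$ quadratic and $B$ the quaternion division algebra over $K$, one has ${}^{\sigma}B\simeq B\simeq B^{\op}$, yet $\corr_{K/F}(B)$ is the nonsplit quaternion algebra over $F$ and $B$ admits no $K/F$-unitary involution. To repair the argument you must either carry the split $F$-form of the fixed algebra all the way through the extraction of $\tau_0$ (normalizing the semilinear map $s$ on $W$ so that $s^2=1$, which is possible exactly because the fixed algebra is split) and show this forces $\lambda\in\N_{K/F}(\mg{K})$, or prove directly that $(K/F,\lambda)\sim\corr_{K/F}(B)$ in $\Br(F)$.
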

\begin{proof}
    See \cite[Theorem 3.1]{KMRT98}.
\end{proof}  

It follows in particular from \Cref{T:1st-ex-Albert} and \Cref{T:ARS} that, if a central simple algebra $A$ has an involution $\s$, then every central simple algebra over $\Z(A)$ which is Brauer equivalent to $A$ carries an involution whose restriction to $\Z(A)$ is the same as for $\s$.

In the sequel, let $K$ be a field, let $B$ be a central simple $K$-algebra, and let $\gamma$ be an involution on $B$.
Then $F=\{x\in K\mid \gamma(x)=x\}$ is a subfield of $K$, and we obtain that $(B,\gamma)$ is an $F$-algebra with involution.

Let $V$ be a finitely generated $B$-right module and $\varepsilon\in\{\pm1\}$.
A bi-additive map $h:V\times V\to B$  is called an $\varepsilon$-\emph{hermitian form over} $(B,\gamma)$ if it satisfies the following:
\begin{itemize}
    \item $h(v\alpha,w\beta)=\gamma(\alpha)h(v,w)\beta$ for all $v,w\in V,\alpha,\beta\in B$,
    \item $h(w,v)=\varepsilon\gamma(h(v,w))$ for all $v,w\in V$.
\end{itemize} 
We also call $h$ simply a \emph{hermitian} (resp.~\emph{skew-hermitian}) \emph{form} when $\varepsilon=1$ (resp.~$\varepsilon=-1$).
For an $\varepsilon$-hermitian form $h$ over $(B,\gamma)$, we will denote by $\rk h$ the rank of the underlying $B$-right module $V$. 
We refer to \cite[Chap. I, \S 2.2 \& \S3.4]{Knu91} for the basic concepts of isometry ($\simeq$) and orthogonal sum ($\perp$) for $\varepsilon$-hermitian forms.

Consider an $\varepsilon$-hermitian form $h$ over $(B,\gamma)$ defined on the $B$-right module $V$.
For any $B$-right submodule $U$ of $V$, the restriction of $h$ to $U\times U$ defines an $\varepsilon$-hermitian form over $(B,\gamma)$ which we denote by $h|_U$.
A $B$-right submodule $U$ of $V$ is called \emph{totally isotropic} (\emph{with respect to $h$}) if $h|_U$ is the zero map.
The form $h$ is \emph{nonsingular} if for any $v\in V\setminus\{0\}$ there exists $w\in V$ such that $h(v,w)=0$.
The form $h$ is \emph{isotropic} if there exists some $v\in V\setminus\{0\}$ such that $h(v,v)=0$, and \emph{anisotropic} otherwise. 
The form $h$ is \emph{hyperbolic} if it is nonsingular 
and $V=U\oplus U'$ for two $B$-right submodules $U$ and $U'$ which are totally isotropic with respect to $h$.
In particular, any non-trivial hyperbolic $\varepsilon$-hermitian form is isotropic. 
If $\cchar K\neq 2$ or $\gamma|_K\neq \id_K$, then any rank-$2$ nonsingular isotropic $\varepsilon$-hermitian form over $(B,\gamma)$ is hyperbolic.

For a field extension $M/K$, we obtain a finitely generated $B_M$-right module $V_M=V\otimes_K M$ and an $\varepsilon$-hermitian form $h_M: V_M\times V_M\to B_M$ over $(B_M,\gamma_M)$ given by $h_M(v\otimes\alpha,w\otimes\beta)=h(v,w)\otimes\alpha\beta$ for  $v,w\in V,\alpha,\beta\in M$.

Let $V$ be a finitely generated $B$-right module. Let $h:V\times V\to B$ be a nonsingular hermitian or skew-hermitian form over $(B,\gamma)$. 
According to \cite[\S 4.A]{KMRT98}, $h$ determines an involution $\ad_h$ on $\End_BV$ satisfying
$$h(v,f(w))=h(\ad_h(f)(v),w)\quad \text{for all}\, v,w\in V\, \text{and all}\, f\in\End_BV.$$
We call $\ad_h$ the \emph{adjoint involution of} $h$.
Viewing $K$ naturally embedded into $\End_BV$, we have that $\ad_h|_K=\gamma|_K$. 
All involutions on $\End_BV$ arise in this way from some nonsingular hermitian form.

\begin{thm}\label{correspondence-(skew)hermit-involution}
\quad
\begin{enumerate}[$(a)$]
    \item Assume that $\cchar K\neq2$ and $\gamma|_K=\id_K$.
    Then any $K$-linear involution $\sigma$ on $\End_BV$ is the adjoint involution $\ad_h$ of some nonsingular hermitian or skew-hermitian form $h$ over $(B,\gamma)$, which is unique up to a factor in $F^{\times}$.
    Moreover, the involutions $\sigma$ and $\gamma$ are both orthogonal or both symplectic if $h$ is hermitian, whereas precisely one of them is orthogonal and the other one symplectic if $h$ is skew-hermitian.
    \item Assume that $\gamma|_K\neq\id_K$. Then any involution $\tau$ on $\End_BV$ with $\tau|_K=\gamma|_K$ is the adjoint involution $\ad_h$ for some nonsingular hermitian form $h$ over $(B,\gamma)$, which is unique up to a factor in $F^{\times}$.
\end{enumerate}
\end{thm}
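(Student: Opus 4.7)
The key tool is the Skolem--Noether theorem applied to the central simple $K$-algebra $E=\End_BV$, which allows us to describe any involution on $E$ as an ``inner twist'' of a fixed reference involution. First I would produce a nonsingular hermitian reference form $h_0:V\times V\to B$ over $(B,\gamma)$; when $V$ is free over $B$ one takes the standard form $h_0(v,w)=\sum_i\gamma(v_i)w_i$, and the general finitely generated case reduces to this by a short Morita argument. Its adjoint $\ad_{h_0}$ is then an involution on $E$ with $\ad_{h_0}|_K=\gamma|_K$, and in case (a) its type (orthogonal or symplectic) matches that of $\gamma$.

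Given the target involution $\sigma$ on $E$ --- $K$-linear in case (a), and in case (b) satisfying $\sigma|_K=\gamma|_K$ by hypothesis --- the composition $\sigma\circ\ad_{h_0}$ is a $K$-algebra automorphism of $E$, so by Skolem--Noether it equals $\Int(u)$ for some $u\in E^{\times}$. The condition $\sigma^2=\id_E$ translates into $\ad_{h_0}(u)=\lambda u$ for some $\lambda\in K^{\times}$, and applying $\ad_{h_0}$ again yields the norm equation $\lambda\gamma(\lambda)=1$. Setting $h(v,w):=h_0(v,u^{-1}(w))$, a direct computation from the defining adjoint relation shows $\ad_h=\sigma$; the sesquilinearity over $(B,\gamma)$ is immediate, and $h$ is $\varepsilon$-hermitian for a sign $\varepsilon$ determined by $\lambda$ and the type of $h_0$.

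In case (a), $\gamma|_K=\id_K$ reduces the norm equation to $\lambda^2=1$, so (using $\cchar K\neq 2$) $\lambda=\pm 1$, producing a hermitian or a skew-hermitian form. The compatibility statement relating the types of $\sigma$ and $\gamma$ to the sign $\varepsilon$ I would verify by base change to a splitting field $L$ of $B$: there $\gamma_L$ and $\sigma_L$ become adjoints of bilinear forms on $L$-vector spaces and $h_L$ corresponds, via Morita, to a bilinear form on a further $L$-vector space, and the parity rule that a tensor product of bilinear forms is symmetric precisely when both factors are simultaneously symmetric or simultaneously alternating delivers the claimed dichotomy. In case (b), $\gamma|_K$ is a non-trivial automorphism of $K$ fixing $F$, so Hilbert's Theorem~90 furnishes $\mu\in K^{\times}$ with $\lambda=\gamma(\mu)\mu^{-1}$; replacing $u$ by $\mu^{-1}u$ leaves $\Int(u)$ and hence $\sigma$ unchanged while arranging $\ad_{h_0}(u)=u$, so the resulting $h$ is hermitian.

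For uniqueness, if $h_1,h_2$ are nonsingular $\varepsilon$-hermitian forms on $V$ with $\ad_{h_1}=\ad_{h_2}$, nonsingularity of $h_1$ yields a unique $c\in E$ with $h_2(v,w)=h_1(c(v),w)$; equating adjoints forces $c$ to centralize $E$, hence $c\in \Z(E)=K$, and compatibility of the hermitian symmetries then forces $\gamma(c)=c$, i.e.\ $c\in F^{\times}$. The main technical point I anticipate is the parity computation in case (a): tracing through the Morita correspondence over a splitting field and matching the orthogonal/symplectic types of $\gamma$ and $\sigma$ with the hermitian/skew-hermitian type of $h$ needs careful bookkeeping; the remaining steps are applications of Skolem--Noether, Hilbert~90 and direct verifications.
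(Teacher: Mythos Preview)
Your argument is correct and is essentially the standard proof of this correspondence, via Skolem--Noether to write $\sigma=\Int(u)\circ\ad_{h_0}$ and then normalizing $u$ (by $\lambda=\pm1$ in case~(a), by Hilbert~90 in case~(b)). The paper, however, does not give its own proof at all: it simply refers to \cite[Theorem~4.2]{KMRT98}. Your sketch is in fact a faithful outline of the argument given there, including the type computation in~(a) (which in \cite{KMRT98} is handled by comparing dimensions of symmetric and skew-symmetric elements rather than by an explicit Morita transfer over a splitting field, but the two verifications are equivalent). So there is nothing to correct; you have reconstructed the proof that the paper is quoting.
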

\begin{proof}
    See \cite[Theorem 4.2]{KMRT98}.
\end{proof}

We denote by $\Ad_B(h)$ the $F$-algebra with involution $(\End_BV,\ad_h)$.

Let $(A,\sigma)$ be an $F$-algebra with involution.
We say that 
$\sigma$ (or $(A,\s)$) is \emph{isotropic} if there exists an $a\in A\setminus\{0\}$ such that $\sigma(a)a=0$, and \emph{anisotropic} otherwise. 
If there exists an element $e\in A$ such that $e^2=e$ and $\sigma(e)=1-e$, then $\s$ (or $(A,\s)$) is called \emph{hyperbolic}.
In particular, any hyperbolic involution is isotropic.

\begin{prop}\label{isotropic-hyperbolic-inv-equiv-cond}
    Let $K$ be a field, $D$ a central $K$-division algebra and $\gamma$ an involution on $D$.
    Assume that $\cchar K\neq2$ or $\gamma$ is unitary.
    Let $V$ be a finite-dimensional $D$-right vector space and $h: V\times V\to D$ an $\varepsilon$-hermitian form over $(D,\gamma)$.
    Then $\Ad_D(h)$ is isotropic (resp. hyperbolic) if and only if $h$ is isotropic (resp. hyperbolic).	
\end{prop}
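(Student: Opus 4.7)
The plan is to translate isotropy and hyperbolicity of $\Ad_D(h)=(\End_DV,\ad_h)$ into the corresponding properties of $h$ by direct computation with the defining relation of $\ad_h$. The key observation is that substituting $f(v)$ for $v$ in this relation yields
$$h(f(v),f(w))=h(\ad_h(f)(f(v)),w)\qquad\text{for all }v,w\in V,$$
so, since $h$ is nonsingular, the condition $\ad_h(f)\circ f=0$ is equivalent to the image $f(V)$ being a totally isotropic $D$-submodule of $V$.

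From this reformulation, the isotropy equivalence follows readily. If $h$ is isotropic at some $v_0\in V\setminus\{0\}$, I fix any nonzero $D$-linear form $\phi:V\to D$ (which is available because $V$ is a nonzero right vector space over the division ring $D$) and set $f(w)=v_0\,\phi(w)$; this $f$ is a nonzero endomorphism with image inside the totally isotropic line $v_0D$, so $\ad_h(f)\circ f=0$, showing that $\Ad_D(h)$ is isotropic. Conversely, starting from a nonzero $f$ with $\ad_h(f)\circ f=0$, the submodule $f(V)$ is nonzero and totally isotropic, and any nonzero vector in it is an isotropic vector for $h$.

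For the hyperbolic equivalence, given a decomposition $V=U\oplus U'$ with $U$ and $U'$ totally isotropic, I let $e\in\End_DV$ be the projection onto $U$ along $U'$. Writing $v=u+u'$ and $w=u_1+u_1'$ with $u,u_1\in U$ and $u',u_1'\in U'$, the total isotropy of $U$ and of $U'$ gives
$$h(v,e(w))=h(u+u',u_1)=h(u',u_1)=h(u',u_1+u_1')=h((1-e)(v),w),$$
so $\ad_h(e)=1-e$, witnessing that $\ad_h$ is hyperbolic. Conversely, from an idempotent $e$ with $\ad_h(e)=1-e$, I take $U=e(V)$ and $U'=(1-e)(V)$; idempotency yields $V=U\oplus U'$, and the identity
$$h(e(v_1),e(v_2))=h(\ad_h(e)(e(v_1)),v_2)=h((1-e)e(v_1),v_2)=0$$
shows that $U$ is totally isotropic, with the analogous computation (using $\ad_h(1-e)=e$) for $U'$.

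The argument is essentially formal once the adjoint identity is in hand; no separate case analysis on the sign $\varepsilon$ is required, since every step uses only the defining relation of $\ad_h$ and not any symmetry of $h$ itself. The hypothesis that $\cchar K\neq 2$ or $\gamma$ is unitary enters only implicitly, through \Cref{correspondence-(skew)hermit-involution}, to ensure that $\ad_h$ is well-defined and that the pairing between $h$ and $\ad_h$ is as stated. The main delicate point I expect is keeping the bookkeeping straight in the hyperbolic converse, in particular verifying both that $V=U\oplus U'$ holds and that each of $U$ and $U'$ is totally isotropic from the single equation $\ad_h(e)=1-e$.
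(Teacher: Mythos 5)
Your argument is correct. For the isotropy equivalence it is essentially the paper's own proof: from an isotropic vector $v_0$ and (by nonsingularity) a vector $w$ with $h(v_0,w)\neq 0$, the paper likewise builds a nonzero rank-one endomorphism with image inside the isotropic line $v_0D$ and checks $\ad_h(f)\circ f=0$, and the converse is the same substitution into the adjoint identity. Where you genuinely diverge is the hyperbolic equivalence: the paper does not prove it but cites \cite[Prop.~6.7]{KMRT98}, whereas you give a short self-contained computation, passing between a decomposition $V=U\oplus U'$ into totally isotropic subspaces and the projection idempotent $e$ with $\ad_h(e)=1-e$. That computation is valid and matches exactly the definitions of hyperbolicity the paper adopts (for forms, the existence of such a decomposition; for involutions, the existence of such an idempotent), so it has the merit of making the proposition independent of the external reference; the citation buys brevity and defers to a statement established in greater generality in the source. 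Your preliminary reformulation --- $\ad_h(f)\circ f=0$ if and only if $f(V)$ is totally isotropic, using nonsingularity for the ``if'' direction --- is a clean packaging that the paper leaves implicit, and it is what lets you treat both halves uniformly with no case distinction on $\varepsilon$. The only caveat worth recording is that everything here uses that $h$ is nonsingular (both for $\ad_h$ to be defined and for the uniqueness of adjoints in the identity $\ad_h(e)=1-e$), which is implicit in the statement since $\Ad_D(h)$ is only defined for nonsingular $h$.
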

\begin{proof}
    The statement for hyperbolicity is given in \cite[Prop. 6.7]{KMRT98}. We prove the statement for isotropy. 
	
    We set $(A,\sigma)=(\End_DV,\ad_h)$. 
    Assume that $(A,\sigma)$ is isotropic. Then there exists some $f\in A\setminus\{0\}$ such that $\sigma(f)\circ f=0$. 
    Since $f\neq0$, there exists $v\in V$ with $f(v)\neq0$. We have $h(f(v),f(v))=h((\sigma(f)\circ f)(v),v)=h(0,v)=0$. Thus $h$ is isotropic. 
	
    Conversely, assume that $h$ is isotropic. There exists some $v\in V\setminus\{0\}$ such that $h(v,v)=0$. 
    Since $h$ is nonsingular, we can find a nonzero vector $w\in V$ such that $h(v,w)\neq0$. 
    Consider $f:V\to V,u\mapsto vh(u,w)$. Then $f\in\End_DV$, and $f\neq0$ as $f(v)=vh(v,w)\neq0$. We have that 
\begin{equation*}
\begin{split}
    h((\sigma(f)\circ f)(u),u')&=h(f(u),f(u'))=h(vh(u,w),vh(u',w))\\
     &=\gamma(h(u,w))h(v,v)h(u',w)=0
\end{split}
\end{equation*}	
     for any $u,u'\in V$. Hence $\sigma(f)\circ f=0$. Therefore $(A,\sigma)$ is isotropic. 
\end{proof}

\section{Hermitian $u$-invariants} \label{section: hermit-u-invariant}
Let $K$ be a field and $B$ a central simple $K$-algebra.
Let $\varepsilon\in\{\pm 1\}$ and let $\gamma$ be an involution on $B$.
Following \cite[Chap.~9, Definition 2.4]{Pfi95}, we set
$$u(B,\gamma,\varepsilon)=\sup\{\rk h\mid \text{$h$ anisotropic $\varepsilon$-hermitian form over}\,(B,\gamma)\}\in\nat\cup\{\infty\}\,,$$
and we call $u(B,\gamma,\varepsilon)$ the $\varepsilon$-\emph{hermitian $u$-invariant} of $(B,\gamma)$.

For later use, we recall a well-known statement relating the $u$-invariant and the rank of an $\varepsilon$-hermitian form to the rank of totally isotropic subspaces.

\begin{lem}\label{L:ttis-subspace-u}
    Let $D$ be a central $K$-division algebra and $\gamma$ an involution on $D$.
    Let $\varepsilon\in\{\pm 1\}$ and let $h$ be an $\varepsilon$-hermitian form over $(D,\gamma)$.
    Then the $D$-right vector space on which $h$ is defined contains a totally isotropic subspace $U$ with respect to $h$ with $\rk U\geq \frac{1}2(\rk h-u(D,\gamma,\varepsilon))$.
\end{lem}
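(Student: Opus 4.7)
The plan is to pick $U$ to be a maximal totally isotropic $D$-subspace of $V$ with respect to $h$ and to bound $\rk U$ from below by exhibiting an anisotropic $\varepsilon$-hermitian form on $U^{\perp}/U$.

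Such a maximal $U$ exists by finite-dimensionality of $V$. Since $U$ is totally isotropic, $U \subseteq U^{\perp}$, where $U^{\perp} = \{v\in V\mid h(u,v)=0\text{ for all }u\in U\}$, and the relation $h(v+u_1, w+u_2)=h(v,w)$ for $v,w\in U^\perp$ and $u_1,u_2\in U$ (using $h(u_1,w)=0$ and $h(v,u_2)=\varepsilon\gamma(h(u_2,v))=0$) shows that $h$ descends to a well-defined $\varepsilon$-hermitian form $\bar h$ over $(D,\gamma)$ on $U^{\perp}/U$. I would check that $\bar h$ is anisotropic: if $\bar h$ had a non-zero isotropic vector $\bar v$, then a lift $v \in U^{\perp}\setminus U$ would satisfy $h(v,v)=0$ and $h(u,v)=0$ for all $u\in U$, so $U+vD$ would be a totally isotropic $D$-subspace of $V$ strictly containing $U$, contradicting maximality. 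By the definition of the $u$-invariant this yields $\rk(U^{\perp}/U)=\rk\bar h\leq u(D,\gamma,\varepsilon)$.

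Next I would invoke the rank identity $\rk U+\rk U^{\perp}=\rk V+\rk V^{\perp}$. This reduces to the nonsingular case: the radical $V^{\perp}$ is contained in $U$ by maximality (since $U+V^{\perp}$ is totally isotropic), the form $h$ descends to a nonsingular $\varepsilon$-hermitian form on $V/V^{\perp}$, and the standard duality for nonsingular forms over a division algebra (via the map $V/V^\perp\to\Hom_D(U/V^\perp,D)$, $v\mapsto h(-,v)$) yields $\rk(U/V^{\perp})+\rk(U^{\perp}/V^{\perp})=\rk(V/V^{\perp})$. Substituting into the previous bound gives $\rk V + \rk V^{\perp} - 2\rk U\leq u(D,\gamma,\varepsilon)$, and since $\rk V=\rk h$ and $\rk V^{\perp}\geq 0$ we conclude $\rk U\geq\tfrac12(\rk h-u(D,\gamma,\varepsilon))$.

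The main points to get right are the well-definedness and anisotropy of $\bar h$ together with the rank identity, both of which hold uniformly over any division algebra with involution regardless of the characteristic or the kind of the involution; this is why the lemma does not need the restrictions appearing in \Cref{isotropic-hyperbolic-inv-equiv-cond}. A naive Witt-decomposition approach $h\simeq h_{\mathrm{an}}\perp\hh^s$ would be insufficient in characteristic $2$ with $\gamma|_K=\id_K$, where a nonsingular isotropic $\varepsilon$-hermitian form need not contain a hyperbolic plane as an orthogonal summand; the quotient argument sidesteps this.
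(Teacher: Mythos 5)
Your proof is correct and follows essentially the same route as the paper's: take a maximal totally isotropic subspace $U$, observe that maximality forces the induced form on (a complement of $U$ in) $U^{\perp}$ to be anisotropic, and conclude by the rank count $\rk U^{\perp}\geq\rk V-\rk U$. The only cosmetic difference is that you work with the quotient $U^{\perp}/U$ and treat the radical explicitly, whereas the paper restricts $h$ to a direct complement $U''$ of $U$ in $U^{\perp}$; the two are isometric, so the arguments coincide.
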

\begin{proof}
    Let $V$ be the $D$-right vector space on which $h$ is defined.
    Let $U$ be a maximal subspace of $V$ that is totally isotropic with respect to $h$. 
    We set $U'=\{w\in V\mid h(w,u)=0\mbox{ for all }u\in U\}$ and note that this is a $D$-right subspace of $V$ with $U\subseteq U'$ and $\rk U'\geq \rk V-\rk U$. 
    Then $U'= U\oplus U''$ for a $D$-right subspace $U''$ of $U'$ with $\rk U''=\rk U'-\rk U\geq \rk V-2 \rk U$.
    It now follows from the choice of $U$ that $h|_{U''}$ is anisotropic.
    Hence $\rk U''\leq u(D,\gamma,\varepsilon)$.
    We conclude that $2\rk U\geq \rk V-\rk U''\geq \rk h -u(D,\gamma,\varepsilon)$.
\end{proof}

The following statement is well-known. In \cite[Prop. 2.2]{Mah05}, it is shown for the case where $B$ is a division algebra. 
We include a proof that does not require this hypothesis.

\begin{prop}\label{3 types-hermitian u-invariant}
    Let $\gamma$ and $\gamma'$ involutions on $B$.
\begin{enumerate}[$(1)$]
    \item Let $\varepsilon\in\{\pm 1\}$. If $\gamma$ and $\gamma'$ are either both orthogonal or both symplectic, then $u(B,\gamma,\varepsilon)=u(B,\gamma',\varepsilon)$.
    If $\gamma$ is orthogonal and $\gamma'$ is symplectic, then $u(B,\gamma,\varepsilon)=u(B,\gamma',-\varepsilon)$.
    \item If $\gamma$ and $\gamma'$ are both unitary and $\gamma|_K=\gamma'|_K$, then $u(B,\gamma,1)=u(B,\gamma',-1)$.
\end{enumerate}	
\end{prop}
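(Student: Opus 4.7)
The plan is, in both parts, to realise the two involutions as related by $\gamma'=\Int(u)\circ\gamma$ for some $u\in B^{\times}$, and then to use left multiplication by $u$ to exhibit a rank- and anisotropy-preserving bijection between $\varepsilon$-hermitian forms over $(B,\gamma)$ and $\varepsilon'$-hermitian forms over $(B,\gamma')$ for an appropriate sign $\varepsilon'\in\{\pm 1\}$. Since $\gamma|_K=\gamma'|_K$ in each case (automatic in~(1), assumed in~(2)), the composite $\gamma^{-1}\gamma'$ is a $K$-linear automorphism of $B$, hence inner by Skolem--Noether. Imposing $(\gamma')^2=\id$ then forces $u\gamma(u)^{-1}$ to lie in the center $K$; equivalently $\gamma(u)=\lambda u$ for some $\lambda\in K^{\times}$ with $\lambda\gamma(\lambda)=1$. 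Replacing $u$ by $\mu u$ with $\mu\in K^{\times}$ replaces $\lambda$ by $\gamma(\mu)\mu^{-1}\lambda$, so $\lambda$ is pinned down only modulo the subgroup $\{\gamma(\mu)\mu^{-1}\mid\mu\in K^{\times}\}$.

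Given such $(u,\lambda)$ with $\lambda\in\{\pm 1\}$ and an $\varepsilon$-hermitian form $h\colon V\times V\to B$ over $(B,\gamma)$, I set $h'(v,w)=uh(v,w)$. Using $\gamma'(\alpha)u=u\gamma(\alpha)$ and the centrality of $\lambda$, a short computation gives
$$h'(v\alpha,w\beta)=\gamma'(\alpha)h'(v,w)\beta\quad\text{and}\quad h'(w,v)=\varepsilon\lambda\,\gamma'(h'(v,w)),$$
so $h'$ is an $(\varepsilon\lambda)$-hermitian form over $(B,\gamma')$ on the same underlying $B$-right module $V$. The assignment $h\mapsto h'$ is a rank-preserving bijection with inverse $h'\mapsto u^{-1}h'$, and since the $B$-module structure on $V$ is unchanged, $h$ and $h'$ have the same isotropic vectors. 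Consequently $u(B,\gamma,\varepsilon)=u(B,\gamma',\varepsilon\lambda)$ whenever $\lambda$ can be taken in $\{\pm 1\}$.

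It remains to pin down the possible values of $\lambda$ in each case. For part~(1) the involution $\gamma$ is $K$-linear, so $\lambda\gamma(\lambda)=\lambda^{2}=1$ forces $\lambda\in\{\pm 1\}$, and there is no scaling freedom; thus $\lambda$ is intrinsic. The classical description of involution types via adjoint bilinear forms after scalar extension to a splitting field (see \cite[Prop.~2.7]{KMRT98}) identifies $\lambda=1$ with the case in which $\gamma$ and $\gamma'$ are of the same type and $\lambda=-1$ with the case of opposite types, which gives both halves of~(1). For part~(2), the scaling coset is $\lambda\cdot\{\gamma(\mu)\mu^{-1}\mid\mu\in K^{\times}\}$, and Hilbert~90 applied to the separable quadratic extension $K/F$ identifies this subgroup with the full norm-one kernel of $N_{K/F}$; since $-1$ lies in that kernel, $\lambda$ can be normalised to $-1$, yielding $u(B,\gamma,\varepsilon)=u(B,\gamma',-\varepsilon)$, and in particular $u(B,\gamma,1)=u(B,\gamma',-1)$.

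The main external ingredient, and the only non-routine step, is the identification of $\lambda\in\{\pm 1\}$ with the type of the involution in the $F$-linear case; the remainder reduces to Skolem--Noether, Hilbert~90, and the direct bilinear computation above.
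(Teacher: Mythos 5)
Your proposal is correct and follows essentially the same route as the paper's proof: Skolem--Noether to write the two involutions as differing by an inner automorphism, scaling the form by the implementing element, \cite[Prop.~2.7]{KMRT98} to identify the sign with the involution type in part (1), and Hilbert~90 to normalise the scalar to $-1$ in the unitary case. (Only a cosmetic remark: the computation actually gives $h'(w,v)=\varepsilon\lambda^{-1}\gamma'(h'(v,w))$ rather than $\varepsilon\lambda$, which is harmless since you only ever use $\lambda\in\{\pm1\}$.)
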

\begin{proof}
    In both parts, we have that $\gamma|_K=\gamma'|_K$. Hence $\gamma\circ\gamma'$ is a $K$-automorphism of $B$. 
    We obtain by the Skolem-Noether Theorem that $\gamma\circ\gamma'=\Int(b)$ for some $b\in\mg{B}$. Then $\gamma=\Int(b)\circ \gamma'$.
    
    $(1)$ If $\gamma$ and $\gamma'$ are either both orthogonal or both symplectic, then we set $\varepsilon'=1$,  otherwise we set $\varepsilon'=-1$.
    Then $\gamma'(b)=\varepsilon'b$, by \cite[Prop. 2.7]{KMRT98}.
    Let $V$ be a finitely generated $B$-right module and $h:V\times V\to B$ an $\varepsilon$-hermitian form over $(B,\gamma)$. 
    One easily verifies that $b^{-1}h:V\times V\to B$ is an $\varepsilon\varepsilon'$-hermitian form over $(B,\gamma')$. 
    Clearly, $h$ is isotropic if and only if $b^{-1}h$ is isotropic, and $\rk h=\rk b^{-1}h$.  
    Similarly, if $h':V\times V\to B$ is an $\varepsilon\varepsilon'$-hermitian form over $(B,\gamma')$, then $bh:V\times V\to B$ is an $\varepsilon$-hermitian form over $(B,\gamma)$. 
    This shows that $u(B,\gamma',\varepsilon\varepsilon')=u(B,\gamma,\varepsilon)$. 
	
    $(2)$ Set $\lambda=b\gamma'(b)^{-1}$. We obtain that $\Int(b)\circ\gamma'\circ\Int(b)=\Int(\lambda)\circ \gamma'$, hence $\id_B=\gamma\circ\gamma=\Int(\lambda)$, whereby $\lambda\in\mg{B}\cap \Z(B)=\mg{K}$.
    In particular $-\lambda^{-1}\gamma'(-\lambda^{-1})=1$. 
    By Hilbert's Theorem 90 applied to the quadratic extension $K/F$ where $F=\{x\in K\mid \gamma'(x)=x\}$, there exists $\lambda'\in \mg{K}$ such that $\lambda'^{-1}\gamma'(\lambda')=-\lambda^{-1}$.
    Set $b'=\lambda'b$. Then $\Int(b')=\Int(b)$ and thus $\gamma=\Int(b')\circ \gamma'$.
    Consider a finitely generated $B$-right module $V$. 
    Given a hermitian form $h:V\times V\to B$ over $(B,\gamma)$, we obtain that $b'^{-1}h$ is a  skew-hermitian form over $(B,\gamma')$. 
    Clearly $h$ is isotropic if and only if $b'^{-1}h$ is isotropic, and $\rk h=\rk b'^{-1}h$.  
    Similarly, if $h':V\times V\to B$ is a skew-hermitian form over $(B,\gamma')$, then $b'h$ is a hermitian form over $(B,\gamma)$. 
    This shows that $u(B,\gamma',-1)= u(B,\gamma,1)$.
\end{proof}

By means of \Cref{3 types-hermitian u-invariant}, we can reduce the number of different hermitian $u$-invariants to be considered for a central simple $F$-algebra.

Assume for now that $\cchar F\neq2$.
Given a central simple  $F$-algebra $B$ with $\exp B\leq2$, we fix an arbitrary orthogonal involution on $B$, which exists by \Cref{T:1st-ex-Albert}, and we set
\begin{equation*}
    u^+(B)=u(B,\gamma,1)\quad \text{and}\quad u^-(B)=u(B,\gamma,-1),
\end{equation*}
observing that, by \Cref{3 types-hermitian u-invariant}~$(1)$, the definition does not depend on the particular choice of $\gamma$.
For a given central simple $F$-algebra $B$ with $\exp B>2$, we set $u^+(B)=u^-(B)=0$.
We call $u^+(B)$ the \emph{orthogonal $u$-invariant of $B$} and $u^-(B)$ the \emph{symplectic $u$-invariant of $B$}.
These notations go back to \cite[Remark 2.3]{Mah05}. 
Note that $u^-(B)=0$ if $B$ is split.

Let us point out the relation between the orthogonal $u$-invariant and the classical $u$-invariant of a field.
Recall that the field $F$ is \emph{nonreal} if $-1$ is a sum of squares in $F$ and \emph{real} otherwise.
If $F$ is nonreal, one defines 
$$u(F) = \sup\{\dim(q)\mid\text{$q$ anisotropic quadratic form over $F$}\}\,. $$ 
This is called the \emph{$u$-invariant of $F$}. We refer to \cite[Chap.~8]{Pfi95} for a treatment of the $u$-invariant, including a discussion of how to extend this notion to cover real fields, which however is not relevant for this article.

\begin{ex}
Assume that $\cchar F\neq 2$. We consider $u^+(B)$ for the central simple $F$-algebra $B=F$. 
By the $1$-$1$-correspondence between symmetric bilinear forms and quadratic forms over $F$, we obtain that $u^+(F)=u(F)$.
\end{ex}

We now define the hermitian $u$-invariant for unitary involutions in a similar way.
Here we make no assumption on the characteristic of $F$.
Consider a separable quadratic field extension $K/F$ and a central simple $K$-algebra $B$.
If there exists a $K/F$-unitary involution $\gamma$ on $B$, (which by \Cref{exist-inv} is if and only if the corestriction algebra $\corr_{K/F}B$ is split), then we set 
$$u(B/F)=u(B,\gamma,1),$$
observing that this is independent of the specific choice of the $K/F$-unitary involution $\gamma$, in view of \Cref{3 types-hermitian u-invariant}~$(2)$.
If $B$ does not admit any $K/F$-unitary involution, then we set $u(B/F)=0$. 
We call $u(B/F)$ the \emph{$F$-unitary $u$-invariant of $B$}.

Note that the hermitian $u$-invariants depend only on the Brauer class of the algebra. This was pointed out in \cite[Lemma 2.1]{Wu18}.

\begin{prop}
    Let $K/F$ be a separable field extension with $[K:F]\leq2$. Let $B$ and $B'$ be central simple $K$-algebras such that $B\sim B'$. 
    Then we have $u^+(B)=u^+(B')$, $u^-(B)=u^-(B')$ and $u(B/F)=u(B'/F)$.
\end{prop}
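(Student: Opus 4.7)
The plan is to reduce everything to the underlying central $K$-division algebra via Wedderburn and then invoke a Morita-type equivalence between hermitian forms to transfer $u$-invariants between Brauer-equivalent algebras.

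First I would use Wedderburn's theorem to fix a central $K$-division algebra $D$ and integers $s,s'\in\nat^+$ with $B\simeq\mathbb{M}_s(D)$ and $B'\simeq\mathbb{M}_{s'}(D)$. It then suffices to show that, for every $n\in\nat^+$, the three $u$-invariants $u^+,u^-,u(\,\cdot\,/F)$ take the same value on $\mathbb{M}_n(D)$ as on $D$. The existence of an involution of a given type on a central simple algebra depends only on its Brauer class: by \Cref{T:1st-ex-Albert}, orthogonal or symplectic involutions exist on $\mathbb{M}_n(D)$ iff $\exp D\leq 2$, and by \Cref{T:ARS}, a $K/F$-unitary involution on $\mathbb{M}_n(D)$ exists iff $\corr_{K/F}D$ is split. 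If the relevant condition fails, both sides of the claimed equality are $0$ by convention, and the case is settled. So I may assume that an involution $\gamma$ of the relevant type (orthogonal; or $K/F$-unitary) is fixed on $D$.

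Next I realize $\mathbb{M}_n(D)=\End_D V_0$ for a right $D$-vector space $V_0$ of dimension $n$, equip $V_0$ with the standard hermitian form $h_0\colon V_0\times V_0\to D$ given by $h_0(x,y)=\sum_{i=1}^n\gamma(x_i)y_i$ over $(D,\gamma)$, and set $\sigma=\ad_{h_0}$. By \Cref{correspondence-(skew)hermit-involution}, $\sigma$ has the same type as $\gamma$ (orthogonal, symplectic, or $K/F$-unitary). The $(B,D)$-bimodule $V_0$ now induces the Morita equivalence $W\mapsto W\otimes_B V_0$ from right $\mathbb{M}_n(D)$-modules to right $D$-modules; a direct check on the simple summand shows that this sends a module of $B$-rank $r$ to a $D$-vector space of dimension $r$, matching the rank convention of the paper. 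Since $h_0$ is hermitian (not skew), this Morita equivalence carries each $\varepsilon$-hermitian form $h$ over $(\mathbb{M}_n(D),\sigma)$ on $W$ to an $\varepsilon$-hermitian form $h'$ over $(D,\gamma)$ on $W\otimes_B V_0$, with $\rk h=\rk h'$, and yields a canonical isomorphism $\Ad_B(h)\simeq\Ad_D(h')$ of $F$-algebras with involution. By \Cref{isotropic-hyperbolic-inv-equiv-cond} (applied to $h'$) together with the same argument extended to $B$-valued forms (the proof in the excerpt goes through verbatim when $D$ is replaced by $B$, using that $h'$ is nonsingular and semisimplicity of $B$), both $h$ and $h'$ are anisotropic iff the common algebra with involution $\Ad_D(h')$ is anisotropic. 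Hence anisotropic $\varepsilon$-hermitian forms on the two sides are in rank-preserving bijection, so $u(\mathbb{M}_n(D),\sigma,\varepsilon)=u(D,\gamma,\varepsilon)$. Specialising $\varepsilon$ and the type of $\gamma$ yields the three claimed equalities.

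The main obstacle is the rank-preserving, anisotropy-preserving correspondence between $\varepsilon$-hermitian forms over $(\mathbb{M}_n(D),\sigma)$ and over $(D,\gamma)$. This is the content of Morita theory for algebras with involution and is essentially routine, but bookkeeping is needed to confirm that the paper's rank convention matches the $D$-dimension of $W\otimes_B V_0$, that no sign twist arises (because $h_0$ is hermitian), and that the isotropy criterion in \Cref{isotropic-hyperbolic-inv-equiv-cond} carries over to forms valued in the non-division algebra $B$. Once these points are handled, the three equalities follow uniformly from the single identity $u(\mathbb{M}_n(D),\sigma,\varepsilon)=u(D,\gamma,\varepsilon)$.
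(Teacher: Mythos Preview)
Your proposal is correct and follows essentially the same route as the paper: the paper's proof is the one-line citation ``This follows by \cite[Theorem 9.3.5]{Knu91} together with \Cref{correspondence-(skew)hermit-involution}'', and your argument is precisely an unpacking of that Morita-equivalence citation. One small caveat: the proof of \Cref{isotropic-hyperbolic-inv-equiv-cond} does not carry over \emph{verbatim} to non-division $B$ (the map $u\mapsto v\,h(u,w)$ need not be nonzero when $B$ has zero-divisors), but your appeal to semisimplicity is the right fix---replace that map by a projection onto a simple submodule of the totally isotropic submodule $vB$.
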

\begin{proof}
    This follows by \cite[Theorem 9.3.5]{Knu91} together with \Cref{correspondence-(skew)hermit-involution}.
\end{proof}

Let $\Br(F)$ denote the Brauer group of $F$ and let $\Br_2(F)$ denote its $2$-torsion subgroup.
For $\alpha\in\Br(F)$, we take the central $F$-division algebra $D$ such that $\alpha=[D]$ and define
$$u^+(\alpha)=u^+(D)\qquad\mbox{ and }\qquad u^-(\alpha)=u^-(D)\,.$$
Similarly, given a separable quadratic field extension $K/F$ and $\alpha\in\Br(K)$, we take the central $K$-division algebra $D$ with $\alpha=[D]$
and define $$u(\alpha/F)=u(D/F)\,.$$

\begin{prop}\label{deg-ind-hermit-u-inv}
    Let $(A,\s)$ be an $F$-algebra with involution. 
    For $(a)$ and $(b)$, assume that $\cchar F\neq 2$.
\begin{enumerate}[$(a)$]
    \item If $\s$ is orthogonal and $\deg A> \ind A\cdot u^+(A)$, then $\s$ is isotropic.
    \item If $\s$ is symplectic and $\deg A> \ind A\cdot u^-(A)$, then $\s$ is isotropic.
    \item If $\s$ is unitary and $\deg A>\ind A\cdot u(A/F)$, then $\s$ is isotropic.
\end{enumerate}
\end{prop}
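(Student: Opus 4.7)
The plan is to replace $(A,\sigma)$ by an $\varepsilon$-hermitian form over the underlying central division algebra, apply \Cref{isotropic-hyperbolic-inv-equiv-cond} to translate isotropy of the involution into isotropy of the form, and then bound the rank of the latter by the appropriate hermitian $u$-invariant. I set $K=\Z(A)$, let $D$ be the central $K$-division algebra in the Brauer class of $A$, and fix an isomorphism $A\simeq \End_DV$ for a finitely generated $D$-right module $V$; then $\rk_DV=\deg A/\ind A$.

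The case in $(c)$ with $K\simeq F\times F$ is immediate: there $(A,\sigma)\simeq (B\times B^{\op},\sw_B)$ for some central simple $F$-algebra $B$, and the idempotent $e=(1,0)$ satisfies $e^2=e$ and $\sigma(e)=1-e$, so $\sigma$ is hyperbolic. In the remaining cases $K$ is a field, and I select an involution $\gamma$ on $D$ adapted to $\sigma$: for $(a)$ and $(b)$, since $\exp D=\exp A\leq 2$, \Cref{T:1st-ex-Albert} provides an orthogonal involution $\gamma$ on $D$; for $(c)$, the existence of $\sigma$ forces $\corr_{K/F}A$, and hence $\corr_{K/F}D$, to be split, so \Cref{T:ARS} provides a $K/F$-unitary involution $\gamma$ on $D$. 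By \Cref{correspondence-(skew)hermit-involution}, I then write $\sigma=\ad_h$ for some nonsingular $\varepsilon$-hermitian form $h$ on $V$ over $(D,\gamma)$, with $\varepsilon=1$ in $(a)$ and $(c)$, and $\varepsilon=-1$ in $(b)$ (where $\sigma$ and $\gamma$ are of opposite kind).

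Now assume that $\sigma$ is anisotropic. By \Cref{isotropic-hyperbolic-inv-equiv-cond}, applicable under $\cchar F\neq 2$ in $(a),(b)$ and thanks to $\gamma$ being unitary in $(c)$, the form $h$ is anisotropic, so
\[\frac{\deg A}{\ind A}=\rk h\leq u(D,\gamma,\varepsilon).\]
By the definitions of $u^+$, $u^-$ and $u(\cdot/F)$ together with the Brauer-class invariance of these invariants, the right-hand side equals $u^+(A)$, $u^-(A)$, or $u(A/F)$ in cases $(a)$, $(b)$, $(c)$ respectively, contradicting the hypothesis.

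The main subtlety is the type bookkeeping needed to identify $\varepsilon$ and to match $u(D,\gamma,\varepsilon)$ with the named invariants. The edge case of $(b)$ with $D=F$ (when $\gamma=\id_F$ is used) is harmless: $h$ is then an alternating form of even rank $\deg A\geq 2$, automatically isotropic in characteristic $\neq 2$, which is consistent with $u^-(F)=0$ so that the hypothesis is always satisfied.
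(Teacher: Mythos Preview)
Your proof is correct and follows essentially the same approach as the paper: reduce to a hermitian form over the Brauer-equivalent division algebra, invoke \Cref{isotropic-hyperbolic-inv-equiv-cond} to translate isotropy, and bound the rank by the relevant $u$-invariant. The only cosmetic difference is that in case $(b)$ the paper picks a symplectic $\gamma$ on $D$ (so $h$ is hermitian and $u(D,\gamma,1)=u^-(D)$ via \Cref{3 types-hermitian u-invariant}), whereas you keep $\gamma$ orthogonal and take $\varepsilon=-1$; your explicit treatment of the degenerate cases $K\simeq F\times F$ and $D=F$ is a welcome clarification.
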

\begin{proof}
    Suppose that $\s$ is anisotropic. Then $K=\Z(A)$ is a field.
    Let $D$ be the central $K$-division algebra Brauer equivalent to $A$. Then $\deg D=\ind A$.
    We fix an involution $\gamma$ on $D$ with $\gamma|_K=\s|_K$ and such that $\gamma$ is orthogonal if and only if $\s$ is orthogonal. 
    By \Cref{correspondence-(skew)hermit-involution}, $(A,\s)\simeq\Ad_D(h)$ for some nonsingular hermitian form over $(D,\gamma)$. 
    Then $\deg A= \ind A\cdot\rk h$.
    
    Since $\s$ is anisotropic, by \Cref{isotropic-hyperbolic-inv-equiv-cond}, $h$ is anisotropic. 
    If $\s$ is orthogonal, then so is $\gamma$, and it follows that $\rk h\leq u^+(D)=u^+(A)$.
    If $\s$ is symplectic, then so is $\gamma$, and we obtain that $\rk h\leq u^-(D)=u^-(A)$.
    Finally, if $\s$ is unitary, then as $\gamma|_K=\s|_K$, we conclude that $\rk h\leq u(D/F)=u(A/F)$.
\end{proof}

Within the study of the $u$-invariant for quadratic forms, a central topic is its behavior under field extensions.

\begin{prop}\label{u-inv-behavior-2-ext}
    Assume that $F$ is nonreal and let $L/F$ be a finite $2$-extension. For $n\in\nat$ with $[L:F]=2^n$, we have $u(L)\leq(\frac{3}{2})^{n}\cdot u(F)$.
\end{prop}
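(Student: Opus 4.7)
My plan is to proceed by induction on $n$, the essential content being the quadratic case $n = 1$. The base case $n = 0$ is immediate since then $L = F$.

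For the inductive step, I would exhibit a tower of quadratic extensions
\[ F = F_0 \subset F_1 \subset \cdots \subset F_n = L, \qquad [F_i : F_{i-1}] = 2. \]
Such a tower always exists for a finite $2$-extension: the Galois closure of the separable part of $L/F$ has a $2$-group as its Galois group, and every $2$-group is solvable with a normal series whose successive quotients are cyclic of order $2$; any purely inseparable part in characteristic $2$ is likewise a tower of simple quadratic extensions by the standard structure theory. Since $F$ is nonreal, $-1$ is a sum of squares in $F$ and hence in every $F_i$, so each $F_i$ is nonreal and each $u(F_i)$ is well-defined. Granting the quadratic case $u(F_i) \leq \tfrac{3}{2}\, u(F_{i-1})$ at every step, iteration yields $u(L) \leq (3/2)^n\, u(F)$, as claimed.

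The substantive point is therefore the quadratic case, which is the classical theorem of Elman and Lam (see \cite[Chap.~8]{Pfi95}). Its proof uses the Scharlau transfer $s_\ast : W(L) \to W(F)$ associated to a nonzero $F$-linear functional $s : L \to F$. Given an anisotropic form $q$ over $L$ of dimension $m$, the transfer $s_\ast(q)$ is a form over $F$ of dimension $2m$; its Witt index is then bounded by combining Frobenius reciprocity with the description of the kernel of the restriction map $W(F) \to W(L)$, and the anisotropic part of $s_\ast(q)$ is shown to contain enough information about $q$ to force $m \leq \tfrac{3}{2}\, u(F)$.

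The main obstacle is precisely this quadratic input: the transfer-theoretic analysis is the delicate step, with the constant $3/2$ emerging from a careful accounting of how much of the Witt index of $s_\ast(q)$ can be absorbed into forms pulled back from $F$. Once this classical result is taken for granted, the induction and the verification that nonreality is preserved along towers of quadratic extensions are routine.
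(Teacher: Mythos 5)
Your proposal is correct and follows essentially the same route as the paper: the paper's proof simply cites the classical quadratic case ($n=1$) from Pfister's book and then inducts along a tower of quadratic extensions, exactly as you do. Your additional remarks on the existence of the tower and the transfer-theoretic origin of the constant $\frac{3}{2}$ are accurate but not part of the paper's (very short) argument.
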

\begin{proof} 
    See \cite[Chap.~9, Cor.~2.2]{Pfi95} for the case where $n=1$.
    From this case, the statement follows by induction on $n$.
\end{proof}

We recall the following fact about the unitary $u$-invariant in the split case.
\begin{prop}\label{u-inv-unitary-split-bounds}
    Assume that $F$ is nonreal. Let $K/F$ be a separable quadratic field extension. The following hold:
\begin{enumerate}[$(a)$]
    \item $u(K/F)\leq\frac{1}{2}u(F)$.
    \item 
    If $u(K/F)>2$, then there exists an anisotropic quadratic $3$-fold Pfister form over $F$.
\end{enumerate}	
\end{prop}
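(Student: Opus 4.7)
The plan is to transfer any anisotropic hermitian form $h$ over $(K,\can_{K/F})$ into an anisotropic $F$-quadratic form $q_h$ of twice the rank via the norm construction, and then in part $(b)$ to recognise a $6$-dimensional subform as a Pfister neighbor of a $3$-fold Pfister form. Assume $\cchar F\neq 2$ and fix $d\in \mg F$ with $K=F(\sqrt d)$ (the characteristic $2$ case is analogous, with the norm form being a quadratic $1$-fold Pfister form). After diagonalising $h\simeq\la a_1,\ldots,a_n\ra$ with $a_i\in\mg F$, the map $v\mapsto h(v,v)$ defines on the underlying $F$-vector space (of dimension $2n$) a quadratic form $q_h\simeq\la a_1,\ldots,a_n\ra\otimes\la 1,-d\ra$, and $h$ is anisotropic if and only if $q_h$ is.

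Part $(a)$ is then immediate: an anisotropic $h$ of rank $n$ produces an anisotropic $F$-quadratic form of dimension $2n$, so $2n\leq u(F)$ and hence $u(K/F)\leq \frac{1}{2}u(F)$.

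For part $(b)$, suppose $u(K/F)>2$ and pick an anisotropic hermitian form $h$ over $(K,\can_{K/F})$ of rank at least $3$. Extracting a rank-$3$ diagonal subform we obtain an anisotropic $\la a_1,a_2,a_3\ra$ with $a_i\in\mg F$, whose associated $6$-dimensional $F$-quadratic form
\[
q=\la a_1,a_2,a_3\ra\otimes\la 1,-d\ra
\]
is anisotropic. Consider the $3$-fold Pfister form
\[
\pi=\llangle d,-a_1a_2,-a_1a_3\rrangle=\la 1,-d\ra\otimes\la 1,a_1a_2\ra\otimes\la 1,a_1a_3\ra
\]
over $F$. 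Expanding and absorbing squares yields the orthogonal decomposition
\[
a_1\pi\simeq q\perp\la a_1a_2a_3,-da_1a_2a_3\ra.
\]
Since $\pi$ is a Pfister form, $a_1\pi$ is either anisotropic or hyperbolic; in the hyperbolic case the Witt index would be $4$ and hence $q$ would be isotropic, contradicting our choice of $h$. Therefore $a_1\pi$, and with it $\pi$, is anisotropic.

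The only genuinely creative step is guessing the Pfister form $\pi$; everything else is a calculation with norms together with the Pfister-or-hyperbolic dichotomy. The characteristic $2$ case runs in exactly the same way, with the norm form of the Artin--Schreier extension $K/F$ (a quadratic $1$-fold Pfister form) taking the role of $\la 1,-d\ra$ and a mixed quadratic $3$-fold Pfister form taking the role of $\pi$.
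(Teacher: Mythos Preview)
Your argument is correct and follows essentially the same route as the paper: transfer the hermitian form $h$ to the trace quadratic form $\q_h$ of dimension $2\rk h$, use this for $(a)$, and for $(b)$ observe that the resulting anisotropic $6$-dimensional form is similar to a subform of a $3$-fold Pfister form, which is therefore anisotropic by the Pfister dichotomy. The only difference is that the paper simply asserts the existence of such a Pfister form (using that $\q_h$ becomes hyperbolic over $K$), whereas you construct it explicitly via the diagonalisation $a_1\pi\simeq q\perp\la a_1a_2a_3,-da_1a_2a_3\ra$; your computation is correct and makes the paper's implicit step fully transparent.
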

\begin{proof}
    We recall from \cite[Chap.~10, \S 1]{Scha85} that, for an arbitrary nonsingular hermitian form over $(K,\can_{K/F})$, the rule $x\mapsto h(x,x)$ defines a quadratic form $\q_h$ over $F$ with $\dim\q_h=2\rk h$  which becomes hyperbolic over $K$, and that $h$ is isotropic if and only if $\q_h$ is isotropic.
    
    $(a)$ As $u(K/F)=u(K,\can_{K/F},1)$, this  follows directly.
	
    $(b)$ Assume that $u(K/F)>2$. We may then choose an anisotropic hermitian form $h$ over $(K/F,\can_{K/F})$ with $\rk h=3$. 
    Then $\q_h$ is a $6$-dimensional quadratic form over $F$ which is similar to a subform of some quadratic $3$-fold Pfister form $\rho$ over $F$.
    Since $\q_h$ is anisotropic, $\rho$ is not hyperbolic.
    Since $\rho$ is a Pfister form, it follows that $\rho$ is anisotropic.
\end{proof}

Using systems of quadratic forms, Mahmoudi obtained in \cite{Mah05} the following upper bounds on the hermitian $u$-invariants in terms of the $u$-invariant of the base field, which we restate here in our setup.

\begin{prop}[Mahmoudi]\label{Mahmoudi-hemit-sym/skew+u-inv}
    Assume that $\cchar F\neq 2$ and $F$ is nonreal.
\begin{enumerate}[$(a)$]
    \item Let $\alpha\in\Br(F)$ and $n=\ind\alpha$. Then 
\begin{equation*}
    \mbox{$u^+(\alpha)\leq\frac{(n+1)(n^2+n+2)}{8n}\cdot u(F)$}\quad\text{and}\quad\mbox{$u^-(\alpha)\leq\frac{(n-1)(n^2-n+2)}{8n}\cdot u(F)$}.
\end{equation*}
    \item Let $K/F$ be a quadratic field extension, $\alpha\in\Br(K)$ and $n=\ind\alpha$. Then 
\begin{equation*}
    \mbox{$u(\alpha/F)\leq\frac{n^2+1}{4}\cdot u(F)$.}
\end{equation*}
\end{enumerate}	 	
\end{prop}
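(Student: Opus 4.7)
The plan is to translate the anisotropy of a hermitian form of rank $m$ over an appropriate division algebra with involution into the non-existence of a common non-trivial zero for an associated system of quadratic forms over $F$, and then invoke Leep's theorem on $u$-invariants of systems of quadratic forms, in the form: if $q_1,\dots,q_d$ are quadratic forms over $F$ in $N$ variables having no common non-trivial zero in $F^N$, then $N\leq \tfrac{d(d+1)}{2}u(F)$.

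First I would reduce to the division algebra case. In $(a)$, if $\exp\alpha>2$ then $u^{+}(\alpha)=u^{-}(\alpha)=0$ by definition and both bounds are trivial; otherwise, by \Cref{T:1st-ex-Albert}, the central $F$-division algebra $D$ with $[D]=\alpha$ admits an orthogonal involution $\gamma$, and for $\varepsilon\in\{\pm 1\}$ we have $u^{\varepsilon}(\alpha)=u(D,\gamma,\varepsilon)$ by definition together with \Cref{3 types-hermitian u-invariant}$(1)$. In $(b)$, if $D$ carries no $K/F$-unitary involution then $u(\alpha/F)=0$; otherwise we fix such an involution $\gamma$. In both cases we set $n=\ind\alpha=\deg D$.

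Next, for an anisotropic $\varepsilon$-hermitian form $h$ of rank $m$ over $(D,\gamma)$ defined on $V=D^m$, the diagonal map $v\mapsto h(v,v)$ takes values in $\Sym(D,\gamma)$ if $\varepsilon=1$ and in $\Skew(D,\gamma)$ if $\varepsilon=-1$. Choosing $F$-bases of $V$ and of the relevant target subspace, this map is encoded by a system of $d$ quadratic forms $q_1,\dots,q_d$ over $F$ in $N$ variables, where $(N,d)$ equals $(mn^2,\tfrac{n(n+1)}{2})$ in case $(a)$ with $\varepsilon=1$; $(mn^2,\tfrac{n(n-1)}{2})$ in case $(a)$ with $\varepsilon=-1$; and $(2mn^2,n^2)$ in case $(b)$, using that $\dim_FD=2n^2$ and $\dim_F\Sym(D,\gamma)=n^2$ for a $K/F$-unitary involution. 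Since $D$ is a division algebra, a non-trivial common zero of $q_1,\dots,q_d$ in $F^N$ would be a nonzero $v\in V$ with $h(v,v)=0$, contradicting anisotropy; hence the associated system has no non-trivial common zero over $F$.

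Leep's bound then forces $N\leq \tfrac{d(d+1)}{2}u(F)$, and substituting the three triples $(N,d)$ and solving for $m$ yields exactly
$u^{+}(\alpha)\leq \tfrac{(n+1)(n^2+n+2)}{8n}u(F)$, $u^{-}(\alpha)\leq\tfrac{(n-1)(n^2-n+2)}{8n}u(F)$, and $u(\alpha/F)\leq \tfrac{n^2+1}{4}u(F)$, as desired. The main obstacle is the external input: one has to invoke Leep's theorem on systems of quadratic forms, which is not proved in the excerpt but is a well-known classical result; once this is in hand, the rest is an elementary bookkeeping of the dimensions of $V$ and of $\Sym(D,\gamma)$, $\Skew(D,\gamma)$ according to the type of $\gamma$.
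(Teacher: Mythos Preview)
Your proposal is correct and is precisely the argument of Mahmoudi \cite[Prop.~3.6]{Mah05}, which is all the paper invokes here; the paper's own proof consists solely of the citation, and you have faithfully reconstructed the underlying method via systems of quadratic forms and Leep's bound $u_d(F)\leq\frac{d(d+1)}{2}u(F)$. The dimension counts and the substitution into Leep's inequality are all accurate (you may want to note the trivial edge case $n=1$ for $u^{-}$, where $d=0$ and the bound is $0$), so there is nothing to add.
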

\begin{proof}
    See \cite[Prop. 3.6]{Mah05}.
\end{proof}

The hermitian $u$-invariants were further studied in \cite{PS13} and \cite{Wu18}.

\begin{thm}[Parihar-Suresh]\label{T:PH}
    Assume that $F$ is nonreal with $\cchar F\neq 2$. 
    Let $K/F$ be a quadratic field extension and $\alpha\in\Br(F)$. 
    Then $u^+(\alpha_K)\leq \frac{3}{2}u^+(\alpha)$, $u^{-}(\alpha_K)\leq \frac{3}2 u^{-}(\alpha)$ and $u(\alpha_K/F)\leq \frac{1}2 u^+(\alpha)+u^-(\alpha)$.
    \end{thm}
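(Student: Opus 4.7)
The plan is: pick a central simple $F$-algebra $D$ with $[D]=\alpha$ and an orthogonal involution $\gamma$ on $D$ (which exists by \Cref{T:1st-ex-Albert} under the tacit assumption $\exp\alpha\leq 2$; otherwise each right-hand side is zero and the corresponding inequality requires a short separate check). Write $K=F(\sqrt{d})$, so that $D_K=D\oplus D\sqrt{d}$ as a $D$-$D$-bimodule, and let $s,s'\colon D_K\to D$ denote the two corresponding $D$-bilinear coordinate projections, $x=s(x)+s'(x)\sqrt{d}$. For each of the three bounds, I take an involution $\tau$ on $D_K$ that restricts to $\gamma$ on $D$, pick an anisotropic $\varepsilon$-hermitian form $h$ over $(D_K,\tau)$ of rank $n$ on a $D_K$-right module $V$, set $h_1=s\circ h$ and $h_2=s'\circ h$, and regard $V$ as a $D$-right module of rank $2n$.

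For the first and second inequalities, I take $\tau=\gamma_K=\gamma\otimes\id_K$, which is orthogonal on $D_K$, with $\varepsilon=1$ and $\varepsilon=-1$ respectively. Since $\gamma_K(\sqrt{d})=\sqrt{d}$, decomposing the identity $h(w,v)=\varepsilon\gamma_K(h(v,w))$ componentwise shows immediately that $h_1$ and $h_2$ are both $\varepsilon$-hermitian over $(D,\gamma)$. Now \Cref{L:ttis-subspace-u} applied to $h_1$ produces a totally isotropic $D$-subspace $U\subseteq V$ with $\rk_D U\geq n-\tfrac{1}{2}u(D,\gamma,\varepsilon)$. If $v\in U$ satisfies $h_2(v,v)=0$, then also $h(v,v)=h_1(v,v)+h_2(v,v)\sqrt{d}=0$, whence $v=0$ by the anisotropy of $h$; hence $h_2|_U$ is anisotropic and $\rk_DU\leq u(D,\gamma,\varepsilon)$. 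Combining gives $n\leq\tfrac{3}{2}u(D,\gamma,\varepsilon)$, which via \Cref{3 types-hermitian u-invariant}~(1) is the first inequality when $\varepsilon=1$ and the second when $\varepsilon=-1$.

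For the third inequality, I take instead $\tau=\gamma\otimes\can_{K/F}$, a $K/F$-unitary involution on $D_K$ (and note that by \Cref{3 types-hermitian u-invariant}~(2) this specific choice computes $u(D_K/F)=u(\alpha_K/F)$). The decisive difference is that now $\tau(\sqrt{d})=-\sqrt{d}$, so the sign flips on the second component when we decompose $h(w,v)=\tau(h(v,w))$: $h_1$ becomes $\gamma$-hermitian and $h_2$ becomes $\gamma$-skew-hermitian over $(D,\gamma)$. Applying \Cref{L:ttis-subspace-u} to $h_1$ yields $U\subseteq V$ totally isotropic for $h_1$ with $\rk_DU\geq n-\tfrac12 u^+(D)$, and the same anisotropy argument as above forces $h_2|_U$ to be anisotropic, giving $\rk_DU\leq u^-(D)$. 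Combining yields $n\leq\tfrac12 u^+(\alpha)+u^-(\alpha)$.

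The one step requiring care, and really the only obstacle, is verifying the $D$-sesquilinearity and the precise $\varepsilon$-symmetry of $h_1$ and $h_2$ in each case; the sign of $\tau$ on $\sqrt{d}$ is what governs whether the two pieces carry the same symmetry (in the orthogonal case, producing the symmetric bound $\tfrac{3}{2}u^\pm$) or opposite symmetries (in the unitary case, producing the asymmetric bound $\tfrac12 u^++u^-$). Everything else is a direct application of \Cref{L:ttis-subspace-u}.
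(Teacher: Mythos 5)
Your reduction is the natural one and it works cleanly — but only under the hidden assumption that $D_K$ is still a division algebra, i.e.\ that $\ind\alpha_K=\ind\alpha$. The gap is in the sentence ``regard $V$ as a $D$-right module of rank $2n$''. If $\ind\alpha_K=\frac12\ind\alpha$ (which happens exactly when $K$ embeds into $D$, e.g.\ when $K$ is a splitting field of a quaternion algebra $D$), then $D_K\simeq\mathbb{M}_2(D')$ for a division algebra $D'$, a simple $D_K$-right module has $F$-dimension $\dim_FD$ rather than $2\dim_FD$, and a $D_K$-module of rank $n$ has rank $n$, not $2n$, as a $D$-vector space. (Rank here counts simple summands, and this is the count that the Brauer-invariance proposition preserves, so you cannot sidestep the issue by passing to $D'$.) Running your argument with $\rk_Dh_1=n$ instead of $2n$, \Cref{L:ttis-subspace-u} only gives $\rk_DU\geq\frac12\bigl(n-u(D,\gamma,\varepsilon)\bigr)$, and combining with $\rk_DU\leq u(D,\gamma,\varepsilon)$ yields $n\leq 3\,u^{\pm}(\alpha)$ in the first two cases and $n\leq u^+(\alpha)+2u^-(\alpha)$ in the third — strictly weaker than the claimed bounds. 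So the proof as written establishes the theorem only in the index-preserving case.

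The index-reduction case needs a genuinely different argument: there one should replace $D_K$ by the centralizer $\tilde D=\C_D(K)$ and exploit the decomposition $D=\tilde D\oplus j\tilde D$ of \Cref{sep-quad-fromF-stable-under-inv} and \Cref{properties of projections}, but in the direction opposite to \Cref{unitary&orthogonal-hermit-sep-quad-ind-red} (i.e.\ bounding anisotropic forms over $\tilde D$ by extending them to forms over $D$), which is how Parihar--Suresh treat it; the paper itself only cites \cite{PS13} for this theorem rather than reproving it. Everything else in your write-up — the sesquilinearity and symmetry computations for $h_1=s\circ h$ and $h_2=s'\circ h$, the observation that $\tau(\sqrt d)=\pm\sqrt d$ governs whether the two components carry equal or opposite symmetry types, and the combination of \Cref{L:ttis-subspace-u} with the anisotropy of $h_2|_U$ — is correct and is indeed the mechanism behind the index-preserving case. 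You should either add the hypothesis $\ind\alpha_K=\ind\alpha$ or supply the missing second case.
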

\begin{proof}
    See \cite[Theorems~4.2 \& 4.3]{PS13}.
\end{proof}

Using \Cref{T:PH}, the following bounds were obtained in \cite{Wu18}.

\begin{thm}[Wu]\label{T:Wu}
    Assume that $F$ is nonreal and $\cchar F\neq2$. 
    Let $\alpha\in\Br(F)$. Assume that $n\in\nat$ is such that $\alpha$ is given by a tensor product of $n$\, $F$-quaternion algebras.
    Then the following hold:
\begin{enumerate}[$(a)$]
    \item \mbox{$u^+(\alpha)\leq\frac{1}{5}(4+(\frac{3}{2})^{2n})u(F)$}.
    \item \mbox{$u^-(\alpha)\leq\frac{1}{5}(-1+(\frac{3}{2})^{2n})u(F)$}.
    \item \mbox{$u(\alpha_K/F)\leq\frac{1}{5}(1+(\frac{3}{2})^{2n+1})u(F)$} for any quadratic field extension $K/F$.
\end{enumerate}	
\end{thm}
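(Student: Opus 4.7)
Our plan is to prove $(a)$, $(b)$, $(c)$ simultaneously by induction on $n$. The base case $n = 0$ is immediate: $\alpha = 0$ so the division algebra is $F$ itself, part $(a)$ reduces to $u^+(F) = u(F)$, part $(b)$ reduces to $u^-(F) = 0$ (every nonsingular alternating form in characteristic different from $2$ is hyperbolic), and part $(c)$ is exactly \Cref{u-inv-unitary-split-bounds}$(a)$.

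For the inductive step, suppose the statement holds for products of fewer than $n$ quaternion algebras over any nonreal field of characteristic different from $2$. Write $\alpha = \beta \cdot [Q]$ in $\Br(F)$ with $Q$ an $F$-quaternion algebra and $\beta$ given by a tensor product of $n - 1$ quaternion $F$-algebras, and fix a separable quadratic subfield $K \subseteq Q$; this extension is nonreal since $F$ is. Because $Q_K$ splits, $\alpha_K = \beta_K$ in $\Br(K)$ is the class of a tensor product of $n - 1$ quaternion $K$-algebras. The induction hypothesis thus yields $u^+(\alpha_K) \leq \tfrac{1}{5}(4 + (3/2)^{2n-2}) u(K)$ and $u^-(\alpha_K) \leq \tfrac{1}{5}(-1 + (3/2)^{2n-2}) u(K)$ (over $K$, via $(a)$ and $(b)$ at level $n-1$), as well as $u(\alpha_K/F) = u(\beta_K/F) \leq \tfrac{1}{5}(1 + (3/2)^{2n-1}) u(F)$ (over $F$, via $(c)$ at level $n-1$ applied to $\beta$ and the extension $K/F$). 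The estimate $u(K) \leq \tfrac{3}{2} u(F)$ from \Cref{u-inv-behavior-2-ext} converts the first two into bounds in terms of $u(F)$.

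The crux of the proof, and the main obstacle, is a pair of descent inequalities of the form
\[ u^+(\alpha) \leq \tfrac{1}{2} u^+(\alpha_K) + u(\alpha_K/F) \quad\text{and}\quad u^-(\alpha) \leq u^-(\alpha_K) + \tfrac{1}{2} u(\alpha_K/F) \]
valid for any quadratic field extension $K/F$. Combined with the inductive bounds above, they yield $(a)$ and $(b)$ at level $n$ via the arithmetic identity $\tfrac{3}{4} \cdot \tfrac{1}{5}(4 + (3/2)^{2n-2}) + \tfrac{1}{5}(1 + (3/2)^{2n-1}) = \tfrac{1}{5}(4 + (3/2)^{2n})$ and its counterpart for $u^-$. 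These descent inequalities go in the reverse direction to the third inequality in \Cref{T:PH}; they are to be derived from the Bayer--Fluckiger--Parimala exact sequence relating the Witt groups of hermitian and skew-hermitian forms over $(D, \s)$, $(D_K, \s_K)$, and $(D_K, \tau)$, where $\tau$ is a $K/F$-unitary involution built from $\s_K$ and the element $d$ with $K = F(\sqrt d)$, via a careful rank-counting argument along this sequence. The delicate point is that a naive decomposition of an anisotropic $h$ over $(D, \s)$ into the anisotropic kernel of $h_K$ plus a transfer part only gives $u^+(\alpha) \leq u^+(\alpha_K) + 2 u(\alpha_K/F)$, which is too weak by a factor of two; obtaining the sharper coefficients $\tfrac{1}{2}$ and $1$ requires exploiting the Witt-cancellation structure of the exact sequence.

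Granting $(a)$ and $(b)$ at level $n$, part $(c)$ at level $n$ follows at once by applying the third inequality in \Cref{T:PH} to $\alpha$ and an arbitrary quadratic extension $L/F$: this gives $u(\alpha_L/F) \leq \tfrac{1}{2} u^+(\alpha) + u^-(\alpha) \leq \tfrac{1}{5}(1 + (3/2)^{2n+1}) u(F)$, as required, closing the induction.
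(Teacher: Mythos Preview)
Your inductive structure and arithmetic are correct, and the paper's proof (which simply cites \cite{Wu18}) also proceeds by induction on $n$, so in outline you are on the right track. However, there is a genuine gap at the heart of your argument: the descent inequalities
\[
u^+(\alpha)\leq\tfrac{1}{2}u^+(\alpha_K)+u(\alpha_K/F)
\quad\text{and}\quad
u^-(\alpha)\leq u^-(\alpha_K)+\tfrac{1}{2}u(\alpha_K/F)
\]
are asserted but not proven. You yourself observe that the natural argument via the Bayer--Fluckiger--Parimala exact sequence over $(D,\sigma)$, $(D_K,\sigma_K)$, $(D_K,\tau)$ only yields the weaker bound $u^+(\alpha)\leq u^+(\alpha_K)+2u(\alpha_K/F)$, and your appeal to ``exploiting the Witt-cancellation structure'' to halve the coefficients is not a proof. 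In fact the sharp coefficients are only established in the paper (\Cref{unitary&orthogonal-hermit-sep-quad-ind-red}) under the extra hypothesis $\ind\alpha_K=\tfrac{1}{2}\ind\alpha$, i.e.\ when $K$ embeds in the division algebra $D$ of $\alpha$. Your claim that the inequalities hold ``for any quadratic field extension $K/F$'' is not supported; the exact-sequence argument over $D$ and $D_K$ cannot produce the factor $\tfrac{1}{2}$ when $D_K$ remains a division algebra.

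The difficulty is that your chosen $K$ (a maximal subfield of one quaternion factor $Q$) need not embed in $D$: the index of $\alpha$ may be strictly smaller than $2^n$, so the index need not drop under this particular $K$. What rescues the argument is that $K$ always embeds in the tensor product $A=Q_1\otimes\cdots\otimes Q_n$ itself, with $\C_A(K)\simeq Q_1\otimes\cdots\otimes Q_{n-1}\otimes K$, and the projection technique of \Cref{properties of projections} can be run inside $A$ rather than inside $D$; Morita equivalence then transports the resulting rank inequalities back to the division-algebra level. This is the step your outline is missing, and without it the induction does not close.
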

\begin{proof}
    This is \cite[Theorem 1.3]{Wu18}. The proof is by induction on $n$.
    For $n=1$, $(a)$, $(b)$, and $(c)$ are established in \cite[Cor. 3.4]{Mah05}, \cite[Chap. 10, Theorem 1.7]{Scha85}, and \cite[Cor. 4.4]{PS13}, respectively.
\end{proof}

\section{Isotropy via quadratic reduction}\label{section:study-isotropy-via-quad-reduc}

Given a central simple algebra with involution and suitable separable quadratic field extension contained in it, one obtains a simple subalgebra carrying a pair of involutions.
Accordingly, we can obtain from a hermitian form over this algebra with involution a pair of hermitian forms over the subalgebra with respect to two different involutions such that the isotropy of this pair is equivalent to the isotropy of the original hermitian form. 
By these means, one can compare the hermitian $u$-invariant of a central simple algebra with involution with the hermitian $u$-invariants of a subalgebra.
This method to study the hermitian $u$-invariants stems from  \cite[Prop. 3.1]{Mah05}. 
The purpose of this section is to extend this method by relaxing the hypotheses under which it can be applied.
This will be achieved with \Cref{T:main}.
Our treatment will cover the case of characteristic $2$ for the unitary case.
As described above, the method relies on the presence of a quadratic subextension. 
In \Cref{unitary&orthogonal-hermit-sep-quad} we discuss  another technique to deal with the complementary situation where we cannot find any quadratic field extension of the center contained in the algebra. 
Together, these methods will allow us in the subsequent sections to obtain bounds on the hermitian $u$-invariants of a central simple algebra with involution in terms of the degree and the $u$-invariant of a splitting $2$-extension; see \Cref{2-ext-unitary-hermit} and \Cref{orthogonal-hermit-split-multi}. 

\medskip
In this section, let $K$ be a field.

\begin{prop}\label{quad-ext-excellent-inv-division}
    Let $D$ be a central $K$-division algebra and $\gamma$ an involution on $D$.
    Assume that $\cchar K\neq2$ if $\gamma|_K=\id_K$.
    Let $\varepsilon\in\{\pm1\}$ and $h$ be a nonsingular $\varepsilon$-hermitian form over $(D,\gamma)$. 
    Let $M/K$ be a separable quadratic field extension such that 
    $D_{M}$ is a division algebra. Then 
\begin{eqnarray*}
    h\simeq h'\perp h_1\perp\ldots\perp h_n
\end{eqnarray*}
    for some $n\in\nat$ and $\varepsilon$-hermitian forms $h',h_1,\ldots,h_n$ over $(D,\gamma)$ such that $h'_{M}$ is anisotropic and, for $1\leq i\leq n$, we have that $\rk h_i=2$ and $(h_{i})_M$ is hyperbolic.
\end{prop}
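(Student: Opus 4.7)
The plan is to induct on $\rk h$. The base case $\rk h=0$ is trivial (take $h'=0$ and $n=0$). For the inductive step, if $h_M$ is anisotropic, set $h'=h$ and $n=0$; otherwise, the crux is to produce a rank-$2$ nonsingular orthogonal summand $h_1$ of $h$ such that $(h_1)_M$ is hyperbolic, after which the induction hypothesis applied to the orthogonal complement $h|_{W^{\perp}}$ (which has rank $\rk h-2$) yields the required decomposition. The nonsingularity of $h_1$ ensures the orthogonal decomposition $V=W\oplus W^{\perp}$ underlying the splitting $h=h_1\perp h|_{W^{\perp}}$.

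To construct $h_1$, let $V$ be the underlying $D$-right space of $h$, and split into two subcases. If $h$ is already isotropic, pick $v\in V\setminus\{0\}$ with $h(v,v)=0$; by nonsingularity of $h$ there is $w\in V$ with $h(v,w)\neq 0$, and $v,w$ are $D$-linearly independent because $h(v,vd)=h(v,v)d=0$ for any $d\in D$. Then $W:=vD+wD$ is free of rank $2$, and the Gram matrix of $h|_W$ has determinant (up to sign) $h(v,w)\gamma(h(v,w))$, which is nonzero since $D$ is a division algebra. Hence $h|_W$ is nonsingular of rank $2$ and isotropic, therefore hyperbolic by the rank-$2$ principle recalled in Section~2 (whose hypothesis $\cchar K\neq 2$ or $\gamma|_K\neq \id_K$ is covered by the hypotheses of the lemma). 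Hyperbolicity is preserved under scalar extension, so $(h|_W)_M$ is hyperbolic and we take $h_1=h|_W$.

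If $h$ is anisotropic but $h_M$ is isotropic, fix an isotropic $w\in V_M\setminus\{0\}$ and write $w=v_1\otimes 1+v_2\otimes\alpha$ for a $K$-basis $\{1,\alpha\}$ of $M$. Were $v_1,v_2$ $D$-linearly dependent (including the case $v_i=0$), one could rewrite $w=(v_i\otimes 1)\cdot x$ for some $i\in\{1,2\}$ and some nonzero $x\in D_M$; invertibility of $x$ in the division algebra $D_M$ would force $h(v_i,v_i)=0$, contradicting anisotropy of $h$. Hence $v_1,v_2$ are $D$-linearly independent, and $W:=v_1 D\oplus v_2 D$ is free of rank $2$. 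If $h|_W$ were singular, a nonzero vector in its radical would be isotropic for $h$, again contradicting anisotropy, so $h|_W$ is nonsingular. Since $w\in W_M$ is a nonzero isotropic vector, $(h|_W)_M$ is a nonsingular rank-$2$ isotropic $\varepsilon$-hermitian form over $(D_M,\gamma_M)$, and a second application of the rank-$2$ principle over $(D_M,\gamma_M)$ shows it is hyperbolic. Take $h_1=h|_W$ and conclude by induction.

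The main obstacle I foresee is verifying the hypothesis of the rank-$2$ principle at the base-changed level in the unitary case: one needs $\cchar M\neq 2$ or $\gamma_M$ non-trivial on $\Z(D_M)=M$. In the first-kind case this reduces immediately to the standing assumption $\cchar K\neq 2$, but in the unitary case it requires a brief check on how $\gamma_M$ acts on the new center, depending on the conventional extension of $\gamma$ along $M/K$. The remaining bookkeeping, namely the passage between $D$-submodules of $V$ and $D_M$-submodules of $V_M$ via the fixed $K$-basis of $M$, is routine.
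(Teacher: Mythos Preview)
Your inductive argument is the standard excellence-type proof and is precisely what the paper invokes by citing \cite[Prop.~4.3]{BB24}; the paper offers no independent argument beyond remarking that the proof there carries over. The two subcases and the descent of an isotropic vector of $h_M$ to a rank-$2$ $D$-subspace of $V$ are correct (the ``determinant'' language for a Gram matrix over a division ring is informal, but the nonsingularity of $h|_W$ in your first subcase follows from $h(v,w)\neq 0$ by inspection).

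On the obstacle you flag: in the unitary case the extension $\gamma_M$ cannot literally be $\gamma\otimes\id_M$ on $D\otimes_KM$, since $\gamma$ is $K$-semilinear rather than $K$-linear (the same issue affects the formula for $h_M$). One must read $\gamma_M$ as $\gamma\otimes\tau$ for an automorphism $\tau$ of $M$ extending $\gamma|_K=\can_{K/F}$; then $\gamma_M$ is $M/M^{\tau}$-unitary, hence non-trivial on $\Z(D_M)=M$, and the rank-$2$ principle over $(D_M,\gamma_M)$ applies in any characteristic, dissolving your concern. Such a $\tau$ need not exist for an arbitrary separable quadratic $M/K$, so strictly speaking the proposition is underspecified in the unitary case; but $\tau$ does exist in the only application (\Cref{unitary&orthogonal-hermit-sep-quad}), where $M=KL$ for a quadratic $L/F$ linearly disjoint from $K/F$ and one may take $\tau=\can_{M/L}$. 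With this reading your proof is complete.
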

\begin{proof}
    See \cite[Prop. 4.3]{BB24} and its proof. (In \cite{BB24}, the general assumption is that $\gamma|_K=\id_K$ and $\cchar K\neq 2$, but the proof \cite[Prop. 4.3]{BB24} is valid in general.)
\end{proof}

The following proposition extends \cite[Prop.~3.1]{Mah05}. (Here the assumptions on $\alpha$ are less restrictive, and we cover characteristic $2$ for the unitary case.)
This extension is complementary to those obtained in \cite[\S 4]{PS13}, while based on similar arguments.

\begin{prop}\label{unitary&orthogonal-hermit-sep-quad}
    Let $K/F$ be a separable field extension with $[K:F]\leq 2$. 
    Let $M/K$ be a quadratic field extension such that $M/F$ is separable and $M$ contains a quadratic extension of $F$ distinct from $K$. 
    Let $\alpha\in\Br(K)$ be such that $\ind \alpha_M=\ind \alpha$. Then the following hold:
\begin{enumerate}[$(a)$]
    \item If $\cchar F\neq 2$ and $K=F$, then 
\begin{equation*}
    u^+(\alpha)\leq u^+(\alpha_M)+2u(\alpha_M/F)\quad\text{and}\quad u^-(\alpha)\leq u^-(\alpha_M)+2u(\alpha_M/F).
\end{equation*}	
    \item If $[K:F]=2$, then $M/F$ has a quadratic subextension $L/F$ such that $M=KL$ and 
\begin{equation*}
    \mbox{$u(\alpha/F)\leq3\cdot u(\alpha_{M}/L)$.}
\end{equation*}	
\end{enumerate}
\end{prop}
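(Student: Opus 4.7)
The strategy is to apply the quadratic excellent decomposition from \Cref{quad-ext-excellent-inv-division} to an anisotropic form realizing the $u$-invariant in question, and then bound the two resulting pieces separately: the anisotropic-over-$M$ part directly by a $u$-invariant of $\alpha_M$, and the $M$-hyperbolic part by a transfer argument in the spirit of Bayer--Fluckiger and Parimala.

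Let $D$ be the central $K$-division algebra of class $\alpha$, which remains a division algebra after extension to $M$ by the hypothesis $\ind\alpha_M=\ind\alpha$. Fix an $F$-linear involution $\gamma$ on $D$ of the appropriate kind: orthogonal in $(a)$, and $K/F$-unitary in $(b)$. Take an anisotropic $\varepsilon$-hermitian form $h$ over $(D,\gamma)$ of rank equal to the invariant in question. Choose an extension $\gamma_M$ of $\gamma$ to an involution on $D_M$: in $(a)$, take $\gamma_M=\gamma\otimes\id_M$, which remains of the first kind; in $(b)$, pick a quadratic subextension $L/F$ of $M/F$ with $M=KL$ and let $\gamma_M$ be the unique extension of $\gamma$ whose restriction to $M$ fixes $L$, making $\gamma_M$ an $M/L$-unitary involution. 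Then \Cref{quad-ext-excellent-inv-division} yields $h\simeq h'\perp h_1\perp\ldots\perp h_n$, where $h'_M$ is anisotropic over $(D_M,\gamma_M)$ and each $h_i$ has rank $2$ with $(h_i)_M$ hyperbolic. Hence $\rk h'\leq u(D_M,\gamma_M,\varepsilon)$, which equals $u^\pm(\alpha_M)$ (matching $\varepsilon$) in $(a)$ and $u(\alpha_M/L)$ in $(b)$.

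To bound $2n$, consider the anisotropic $\varepsilon$-hermitian form $h_0=h_1\perp\ldots\perp h_n$ over $(D,\gamma)$ of rank $2n$ that becomes hyperbolic over $(D_M,\gamma_M)$. The transfer technique introduced in \cite[Appendix~2 and \S3.1]{BFP95} --- the analogue for algebras with involution of the Scharlau transfer --- associates to such an $h_0$ an anisotropic $\pm 1$-hermitian form $\tilde h$ of rank $n$ over $(D_M,\gamma_M')$, where $\gamma_M'$ is the extension of $\gamma|_K$ to $M$ distinct from $\gamma_M|_M$. In $(a)$, $\gamma_M'=\gamma\otimes\can_{M/F}$ is $M/F$-unitary; in $(b)$, $\gamma_M'$ is $M/N$-unitary, where $N$ is the third quadratic subextension of $M/F$ besides $K$ and $L$. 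Since $\gamma_M'$ is unitary in each case, part $(2)$ of \Cref{3 types-hermitian u-invariant} (applied with $\gamma'=\gamma_M'$) gives $u(D_M,\gamma_M',1)=u(D_M,\gamma_M',-1)$; hence $n\leq u(\alpha_M/F)$ in $(a)$ and $n\leq u(\alpha_M/N)$ in $(b)$.

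Combining yields $\rk h\leq u^\pm(\alpha_M)+2u(\alpha_M/F)$ in $(a)$, establishing both stated inequalities. In $(b)$ we obtain $\rk h\leq u(\alpha_M/L)+2u(\alpha_M/N)$; as the statement only asserts the existence of a suitable $L$ among the two non-$K$ quadratic subextensions of $M/F$ with $M=KL$, we may relabel so that $u(\alpha_M/L)\geq u(\alpha_M/N)$, concluding $\rk h\leq 3u(\alpha_M/L)$. The main obstacle is the transfer step: one must verify that the assignment $h_0\mapsto\tilde h$ indeed halves the rank, preserves anisotropy, produces a form over $(D_M,\gamma_M')$ with $\gamma_M'$ the ``other'' extension of $\gamma|_K$, and works in characteristic $2$ for $(b)$. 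This should follow from a suitable Witt-group exact sequence adapted from \cite{BFP95}, or, more concretely, by decomposing each rank-$2$ block $h_i$ as a ``norm-like'' form coming from the quadratic extension $M/K$ and reassembling the collection into a single form of rank $n$ over $(D_M,\gamma_M')$.
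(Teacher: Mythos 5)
Your overall strategy coincides with the paper's: decompose an anisotropic form $h$ via \Cref{quad-ext-excellent-inv-division} into a part that stays anisotropic over $M$ and a part $h''$ of rank $2n$ that becomes hyperbolic over $M$, bound the first part by $u(D_M,\gamma_M,\varepsilon)$, and convert $h''$ into a rank-$n$ anisotropic form over $D_M$ with respect to a unitary involution. However, the conversion step is precisely the heart of the proof, and you have not established it: you describe what the ``transfer'' must accomplish (halve the rank, preserve anisotropy, land on the other extension of $\gamma|_K$, work in characteristic $2$ for the unitary case) and then defer all of it to ``a suitable Witt-group exact sequence adapted from \cite{BFP95}.'' That reference does not cover the situation at hand as stated: it works in characteristic different from $2$, and --- more importantly --- your concrete fallback of decomposing each rank-$2$ block as a ``norm-like form coming from $M/K$'' cannot be carried out inside $D$, because the hypothesis $\ind\alpha_M=\ind\alpha$ means exactly that $M$ does \emph{not} embed into $D$. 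Even granting an exact sequence whose middle term is $W(D,\gamma)$, you would still owe the argument that an \emph{anisotropic} $h''$ in the image of the transfer satisfies $\rk h''\leq 2\rk\tilde h$ for some anisotropic $\tilde h$; this follows from comparing anisotropic parts, but it is not automatic from exactness alone and you do not address it.

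The paper closes this gap differently: it passes to the algebra with involution $(A,\sigma)=\Ad_D(h'')$, which is anisotropic by \Cref{isotropic-hyperbolic-inv-equiv-cond} and becomes hyperbolic over $M$, and then invokes the embedding theorems of Berhuy--Frings--Tignol \cite{BFT07} to realize $M$ as a $\sigma$-stable subfield of $A$ with $\sigma|_M\neq\id_M$. The centralizer $C=\C_A(M)$ is Brauer equivalent to $D_M$ and carries the anisotropic unitary involution $\sigma|_C$, so by \Cref{correspondence-(skew)hermit-involution} it is adjoint to an anisotropic hermitian form $\tilde h$ over $(D_M,\tilde\gamma)$ for some unitary $\tilde\gamma$; the identity $\deg A=2\deg C$ then gives $\rk h''=2\rk\tilde h$. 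This route also works in characteristic $2$ for the unitary case. Two smaller points: an anisotropic form ``of rank equal to the invariant'' need not exist (the supremum may be infinite or unattained), so you should bound the rank of an arbitrary anisotropic form instead; and your claim that the involution obtained on $D_M$ is specifically the extension of $\gamma|_K$ \emph{distinct} from $\gamma_M|_M$ is not justified --- in the paper one only knows $\tilde\gamma|_M\neq\id_M$ and $\tilde\gamma|_K=\gamma|_K$, which fortunately is all that the final maximum argument in part $(b)$ requires.
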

\begin{proof}
    If $K=F$, then we assume that $\cchar F\neq 2$. Let $D$ be the central $K$-division algebra such that $\alpha=[D]$.
    If $K=F$ and $u^+(\alpha)=0$, respectively if $[K:F]=2$ and $u(\alpha/F)=0$, then the claim of $(a)$, respectively of $(b)$ holds trivially.
    We may assume that this is not the case.
    We may thus fix an $F$-linear involution $\gamma$ on $D$ with $F=\{x\in K\mid \gamma(x)=x\}$ which is orthogonal or unitary (depending on $[K:F]$).
    
    Note that $D_{M}$ is a division algebra because $\ind\alpha_{M}=\ind\alpha$, and $\gamma_M=\gamma\otimes \id_M$ is an $F$-linear involution on $D_{M}$.

    Let $\varepsilon\in\{\pm 1\}$ and consider an anisotropic $\varepsilon$-hermitian form $h$ over $(D,\gamma)$, defined on some finite-dimensional $D$-right vector space $V$. 
    By \Cref{{quad-ext-excellent-inv-division}}, $h\simeq h'\perp h''$ for some $\varepsilon$-hermitian forms $h'$ and $h''$ such that $h'_M$ is anisotropic and $h''_M$ is hyperbolic. 
    In particular, $\rk h'=\rk h'_M\leq u(D_M,\gamma_M,\varepsilon)$.
    
    Set $(A,\sigma)=\Ad_D(h'')$. Then $(A,\sigma)$ is an $F$-algebra with involution and $\deg A=\rk h''\cdot\deg D$. 
    Since $h$ is anisotropic, so is $h''$, and hence $\sigma$ is anisotropic.
    As $h''_M$ is hyperbolic and $\Ad_{D_M}(h''_M)\simeq (A_M,\sigma_M)$, we obtain $\sigma_M$ is hyperbolic.
    It follows by \cite[Theorem 1.15 and Theorem 1.16]{BFT07} that we can identify $M$ with an $F$-subalgebra of $A$ with $\sigma(M)=M$ and $\sigma|_M\neq \id_M$.

    Set $C=\C_A(M)$. Then $C$ is a central simple $M$-algebra and $\sigma|_C$ is an anisotropic unitary involution on $C$.
    Since $[C]=[D_M]=\alpha_M$ in $\Br(M)$, we can choose an involution $\tilde{\gamma}$ on $D_M$ with $\tilde{\gamma}|_M=\sigma|_M$.
    By \Cref{correspondence-(skew)hermit-involution}, it follows that $(C,\sigma|_C)\simeq\Ad_{D_{M}}(\tilde{h})$ for some anisotropic hermitian form $\tilde{h}$ over $(D_{M},\tilde{\gamma})$.
    Hence $\rk \tilde h\leq u(D_M,\tilde{\gamma},1)$.
    Since $\deg D=\ind \alpha=\ind\alpha_M=\deg D_M$ and 
    $\deg D\cdot\rk h''=\deg A= 2\deg C=2\deg D_M \cdot \rk \tilde{h}$, we now conclude that $\rk h''=2\rk \tilde{h}\leq 2 u(D_M,\tilde{\gamma},1)$.

    It follows that $\rk h=\rk h'+\rk h''\leq u(D_M,\gamma_M,\varepsilon)+2u(D_M,\tilde{\gamma},1)$.

    This argument shows that 
    $$u(D,\gamma,\varepsilon)\leq u(D_M,\gamma_M,\varepsilon)+2u(D_M,\tilde{\gamma},1)\,.$$

    If $\cchar F\neq 2$ and $K=F$, then using the two possible choices of $\varepsilon\in\{\pm 1\}$, this proves the two inequalities in $(a)$.

    Assume now that $[K:F]=2$. We take $\varepsilon=1$.
    Set $\gamma_0=\gamma_M$ and $\gamma_1=\tilde{\gamma}$, and for $i=0,1$ let $L_i=\{x\in M\mid \gamma_i(x)=x\}$.
    Then the above inequality yields that $u(\alpha/F)=u(D,\gamma,1)\leq u(D_M,\gamma_0,1)+2u(D_M,\gamma_1,1)=u(\alpha_M/L_0)+2u(\alpha_M/L_1)$.
    Hence letting $k\in\{0,1\}$ be such that $u(\alpha_M/L_k)\geq u(\alpha_M/L_{1-k})$, we conclude that $u(\alpha/F)\leq 3u(\alpha_M/L_k)$, which establishes $(b)$.
\end{proof}

\begin{prop}\label{sep-quad-fromF-stable-under-inv}
    Let $D$ be a central $K$-division algebra and let $L/F$ be a separable quadratic field extension contained in $D$ and linearly disjoint from $K/F$.
    There exists $j\in D^{\times}$ such that $\Int(j)|_L=\can_{L/F}$.
    Furthermore, given such $j$ and an involution $\gamma$ on $D$ with $F=\{x\in K\mid \gamma(x)=x\}$, there exists an involution $\gamma'$ on $D$ such that $\gamma'(j)=-j$ and $\gamma'|_K=\gamma|_K$, $\gamma'|_L=\can_{L/F}$.
\end{prop}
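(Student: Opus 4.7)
The plan for the first assertion is to invoke the Skolem--Noether theorem. The composite $E:=KL\subseteq D$ is isomorphic to $K\otimes_F L$ by the linear disjointness of $L/F$ and $K/F$, hence a separable quadratic extension of $K$. The canonical involution $\can_{L/F}$ extends uniquely to a $K$-algebra automorphism $\sigma$ of $E$ of order two, fixing $K$ pointwise. The inclusion and $\sigma$ provide two $K$-algebra embeddings of $E$ into $D$, which are conjugate by some $j\in D^{\times}$ by Skolem--Noether; this $j$ satisfies $\Int(j)|_E=\sigma$, and in particular $\Int(j)|_L=\can_{L/F}$.

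For the second assertion my plan proceeds in two steps. First, I would construct an auxiliary involution $\gamma_1$ on $D$ with $\gamma_1|_L=\can_{L/F}$ and $\gamma_1|_K=\gamma|_K$. Since $L$ is commutative, the restriction $\gamma|_L:L\to D$ is an $F$-algebra homomorphism, which extends (by sending $k\in K$ to $k$) to a $K$-algebra homomorphism $\psi:E\to D$; this is well-defined because $\gamma(L)$ commutes with $K$ and $E\cong K\otimes_F L$. Skolem--Noether applied to $\psi$ gives $v\in D^{\times}$ with $\Int(v)|_E=\psi$. Setting $g:=jv^{-1}$, a direct calculation shows that $\gamma_1:=\Int(g)\circ\gamma$ sends $x\in L$ to $jv^{-1}\cdot vxv^{-1}\cdot vj^{-1}=jxj^{-1}=\can_{L/F}(x)$, and restricts to $\gamma|_K$ on $K$. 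A Hilbert~90 adjustment of $v$ within its coset modulo $C_D(E)^{\times}$ then arranges $g\gamma(g)^{-1}$ to lie in $K^{\times}$ with the required norm-one condition, so that $\gamma_1$ is an involution.

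In the second step, using $\gamma_1|_L=\can_{L/F}$ together with the relation $xj=j\can_{L/F}(x)$ for $x\in L$, one reads off that $\gamma_1(j)=cj$ for some $c\in B^{\times}$, where $B:=C_D(L)$, and the identity $\gamma_1^2=\id$ yields the compatibility $\gamma_1(c)=\Int(j^{-1})(c^{-1})$. I then seek $\gamma'=\Int(u)\circ\gamma_1$ with $u\in B^{\times}$, so that $\gamma'|_L=\can_{L/F}$ is automatically preserved; the requirement $\gamma'(j)=-j$ becomes $\Int(j)(u)=-uc$ in $B^{\times}$. The main obstacle is to solve this equation simultaneously with the involution condition $u\gamma_1(u)^{-1}\in K^{\times}$: this is a Hilbert-90-type problem for the $F$-automorphism $\Int(j)$ of the central simple $E$-algebra $B$, which has order two only modulo the inner automorphism by $j^2\in B^{\times}$. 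The necessary cocycle condition on $-c$ is supplied by the compatibility relation above, and the obstruction coming from the failure of $\Int(j)$ to be a strict involution on $B$ is precisely controlled by $j^2\in B^{\times}$; a final rescaling of $u$ by an element of $L^{\times}$ of suitable norm secures the involution property, yielding the desired $\gamma'$.
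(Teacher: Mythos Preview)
Your argument for the existence of $j$ is correct and essentially identical to the paper's.

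For the second assertion, your two-step programme leaves the real work undone. In Step~1 the claim that ``a Hilbert~90 adjustment of $v$'' forces $g\gamma(g)^{-1}\in K^\times$ (so that $\gamma_1=\Int(g)\circ\gamma$ is an involution) is asserted but not verified; this step could in any case be bypassed by the involution-extension theorem. The serious gap is Step~2. You must produce $u\in B^\times$ satisfying simultaneously $\Int(j)(u)=-uc$ and $\gamma_1(u)\in K^\times u$, and for this you offer only the phrases ``Hilbert-90-type problem'', ``the obstruction \ldots\ is precisely controlled by $j^2$'', and ``a final rescaling \ldots\ secures the involution property''. No construction of $u$ is given. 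Since exhibiting such a $u$ (given $\gamma_1$) is tantamount to exhibiting $\gamma'$ itself, what you have written is a reformulation of the goal rather than a proof.

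The paper's argument is entirely different and avoids all equation-solving. Since $\Int(j)^2|_L=\id_L$, the element $j^2$ commutes with $L$, so $L(j^2)$ is a subfield of $D$ not containing $j$, and $H=L(j^2)\oplus jL(j^2)$ is a quaternion algebra over $F(j^2)$ inside $D$. Setting $Q=KL(j^2)\oplus jKL(j^2)\simeq H\otimes_{F(j^2)}K(j^2)$, the involution $\tau=\can_H\otimes\can_{K(j^2)/F(j^2)}$ on $Q$ visibly satisfies $\tau(j)=-j$, $\tau|_L=\can_{L/F}$, and $\tau|_K=\gamma|_K$. The involution-extension theorem \cite[Theorem~4.14]{KMRT98} then yields an involution $\gamma'$ on $D$ with $\gamma'|_Q=\tau$, and all three required properties are inherited from $\tau$ at once.
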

\begin{proof}
    By the Skolem-Noether Theorem, there exists an element $j\in D^{\times}$ such that $\Int(j)|_{KL}=\can_{KL/K}$. Then $\Int(j)|_L=\can_{L/F}$. 
    Consider an involution $\gamma$ on $D$ with $F=\{x\in K\mid \gamma(x)=x\}$.  	
    Note that $L(j^2)$ is a subfield of $D$.
    As $\Int(j)|_L=\can_{L/F}\neq \id_L$ we have $j\notin L(j^2)$. Set $H=L(j^2)\oplus jL(j^2)$ and $Q=KL(j^2)\oplus jKL(j^2)$. 
    Then $H$ is an $F(j^2)$-quaternion algebra and $Q$ is a $K(j^2)$-quaternion algebra contained in $D$.
    Note that $H$ and $K(j^2)$ are $F(j^2)$-subalgebras of $Q$, and $j^2$ commutes with $KL$ and hence with $Q$.
    Multiplication in $Q$ induces an isomorphism of $F(j^2)$-algebras $H\otimes_{F(j^2)}K(j^2)\to Q$.
    Let $\tau$ be the involution on $Q$ corresponding with $\can_H\otimes\can_{K(j^2)/F(j^2)}$ under this isomorphism.  
    Then $\tau|_K=\can_{K/F}=\gamma|_K$.
    Hence, by \cite[Theorem 4.14]{KMRT98}, there exists an involution $\gamma'$ on $D$ such that $\gamma'|_Q=\tau$. 
    In particular, $\gamma'|_K=\gamma|_K$ and $\gamma'|_L=\can_{L/F}$.
\end{proof}

\begin{prop}\label{properties of projections}
    Let $D$ be a central $K$-division algebra and $\gamma$ an involution on $D$. Let $F=\{x\in K\mid \gamma(x)=x\}$.
    Let $L/F$ be a separable quadratic field extension contained in $D$, linearly disjoint from $K/F$ and with $\gamma|_L=\can_{L/F}$.
    Let $j\in D^{\times}$ be such that $\gamma(j)=-j$ and $\Int(j)|_L=\can_{L/F}$. 
    Set $\tilde{D}=C_D(L)$, $\gamma_0=\gamma|_{\tilde{D}}$ and $\gamma_1=(\Int(j^{-1})\circ\gamma)|_{\tilde{D}}$. 
    Then $\tilde{D}$ is an $L$-division algebra with $\Z(\tilde{D})=LK$ and such that $D=\tilde{D}\oplus j\tilde{D}$.
    Furthermore, the following hold:
\begin{enumerate}[$(a)$]
    \item The maps $\gamma_0$ and $\gamma_1$ are involutions on $\tilde{D}$, and $\gamma_0$ is unitary.
    If $\gamma$ is unitary, then so is~$\gamma_1$.
    If $\cchar F\neq 2$, then $\gamma$ is orthogonal if and only if $\gamma_1$ is symplectic, and vice-versa.
    \item Let $\pi_0,\pi_1:D\to\tilde{D}$ be the $F$-linear maps such that $\id_D=\pi_0+j\pi_1$.
    Let $\varepsilon\in\{\pm1\}$ and let $h:V\times V\to D$ be a nonsingular $\varepsilon$-hermitian form over $(D,\gamma)$ defined on a finite-dimensional $D$-right vector space $V$. 
    Then $\pi_0\circ h:V\times V\to\tilde{D}$ is a nonsingular $\varepsilon$-hermitian form over $(\tilde{D},\gamma_0)$ and $\pi_1\circ h:V\times V\to\tilde{D}$ is a nonsingular $(-\varepsilon)$-hermitian form over $(\tilde{D},\gamma_1)$.
\end{enumerate}	
\end{prop}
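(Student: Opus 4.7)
The plan is to first establish the structural claims about $\tilde D$, then verify (a), and finally handle (b). For the structural claims, $\tilde D=C_D(L)$ is a subring of the division algebra $D$, hence a division ring; by the double centralizer theorem applied to the simple subalgebra $L\subseteq D$, it is central simple over $\Z(\tilde D)=L\cdot\Z(D)=LK$ with $\dim_F\tilde D=\tfrac12\dim_FD$. The hypothesis $\Int(j)|_L=\can_{L/F}\neq\id_L$ forces $j\notin\tilde D$; since $D$ is a division algebra, $\tilde D\cap j\tilde D=\{0\}$ (else $j=xy^{-1}\in\tilde D$), and a dimension count then yields $D=\tilde D\oplus j\tilde D$.

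For (a), the relations $\gamma(L)=L$ and $\Int(j)(L)=L$ yield $\gamma(\tilde D)=\tilde D$ and $\Int(j^{\pm 1})(\tilde D)=\tilde D$, so $\gamma_0$ is an involution on $\tilde D$ and $\gamma_1$ is an anti-automorphism of $\tilde D$, the latter satisfying $\gamma_1^2(d)=j^{-1}\gamma(j)\,d\,\gamma(j)^{-1}j=d$ by the hypothesis $\gamma(j)=-j$. One reads off $\gamma_0|_L=\can_{L/F}\neq\id_L$, so $\gamma_0|_{LK}\neq\id_{LK}$ and $\gamma_0$ is unitary; and $\gamma_1|_L=\id_L$ (since $\Int(j^{-1})|_L=\can_{L/F}$ composed with $\gamma|_L=\can_{L/F}$ is the identity) while $\gamma_1|_K=\gamma|_K$, so $\gamma_1$ is unitary precisely when $\gamma$ is. For the orthogonal/symplectic dichotomy, assume $\cchar F\neq 2$ and $K=F$. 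Writing $d=d_0+jd_1$ with $d_0,d_1\in\tilde D$, the identity $\gamma(d)=\gamma_0(d_0)-j\gamma_1(d_1)$ gives an $F$-linear decomposition $\Sym(D,\gamma)=\Sym(\tilde D,\gamma_0)\oplus j\cdot\Skew(\tilde D,\gamma_1)$. With $n=\deg D$ and $\tilde n=\deg_L\tilde D=n/2$, the standard dimension formulas give $\dim_F\Sym(\tilde D,\gamma_0)=\tilde n^2$ (as $\gamma_0$ is unitary), $\dim_F\Skew(\tilde D,\gamma_1)=\tilde n(\tilde n\mp 1)$ according as $\gamma_1$ is orthogonal or symplectic, and $\dim_F\Sym(D,\gamma)=n(n\pm 1)/2=\tilde n(2\tilde n\pm 1)$ according as $\gamma$ is orthogonal or symplectic; matching these dimensions forces $\gamma$ orthogonal $\iff$ $\gamma_1$ symplectic. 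This bookkeeping across three distinct types of involution over two nested algebras is the most delicate step.

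For (b), the decomposition $D=\tilde D\oplus j\tilde D$ combined with $aj=j\Int(j^{-1})(a)$ for $a\in\tilde D$ yields the bimodule rules $\pi_0(adb)=a\pi_0(d)b$ and $\pi_1(adb)=\Int(j^{-1})(a)\pi_1(d)b$ for $a,b\in\tilde D,d\in D$; meanwhile the formula $\gamma(d_0+jd_1)=\gamma_0(d_0)-j\gamma_1(d_1)$ from (a) translates into $\pi_0\circ\gamma=\gamma_0\circ\pi_0$ and $\pi_1\circ\gamma=-\gamma_1\circ\pi_1$. Using these identities, routine verification confirms that $\pi_0\circ h$ is $\varepsilon$-hermitian over $(\tilde D,\gamma_0)$ and $\pi_1\circ h$ is $(-\varepsilon)$-hermitian over $(\tilde D,\gamma_1)$, the sesquilinearity of $\pi_1\circ h$ matching the identity $\gamma_1(a)=\Int(j^{-1})(\gamma(a))$ for $a\in\tilde D$. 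For nonsingularity, given $v\in V\setminus\{0\}$, the right-$D$-linear map $V\to D,\,w\mapsto h(v,w)$ has nonzero image, hence equals all of $D$ since $D$ is a division algebra; choosing $w,w'\in V$ with $h(v,w)=1$ and $h(v,w')=j$ then gives $\pi_0(h(v,w))=1\neq 0$ and $\pi_1(h(v,w'))=1\neq 0$, yielding nonsingularity of both projected forms.
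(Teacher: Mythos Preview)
Your argument is correct and follows essentially the same path as the paper for the structural claims and for part~$(b)$; your nonsingularity argument via surjectivity of $w\mapsto h(v,w)$ is a clean variant of the paper's relation $h_0(vj,w)=-j^2h_1(v,w)$, $h_1(vj,w)=-h_0(v,w)$.

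The one genuine methodological difference is in part~$(a)$, for the orthogonal/symplectic dichotomy. The paper invokes \cite[Prop.~2.7]{KMRT98}: since $\gamma(j)=-j$, the involution $\Int(j^{-1})\circ\gamma$ on $D$ has type opposite to $\gamma$, and then (implicitly) its restriction $\gamma_1$ to the centralizer $\tilde D$ of the pointwise-fixed \'etale subalgebra $L$ retains that type. Your dimension count on $\Sym(D,\gamma)=\Sym(\tilde D,\gamma_0)\oplus j\cdot\Skew(\tilde D,\gamma_1)$ is a self-contained alternative that avoids this extra (standard but not entirely trivial) restriction step, at the cost of a short bookkeeping computation. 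Both approaches are valid; yours is arguably more transparent here.
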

\begin{proof}
    It is easy to see that $\gamma(\tilde{D})=\tilde{D}$, $\Z(\tilde{D})=KL$, $D=\tilde{D}\oplus j\tilde{D}$ and that $\gamma_0$ and $\gamma_1$ are involutions on $\tilde{D}$.

    $(a)$ Clearly $(\gamma_0)|_L\neq\id_L$, so $\gamma_0$ is unitary. Note that $(\gamma_1)|_L=\id_L$ and $(\gamma_1)|_K=\gamma|_K$. 
    Therefore $\gamma_1$ is unitary if and only if $\gamma$ is unitary. Assume now that this is not the case, that is $\gamma|_K=\id_K$ and $F=K$.
    Since $\gamma(j)=-j$, assuming that $\cchar F\neq2$, it follows by \cite[Prop. 2.7]{KMRT98} that if $\gamma$ is orthogonal, then $\gamma_1$ is symplectic, and vice-versa.

    $(b)$ Set $h_0=\pi_0\circ h$ and $h_1=\pi_1\circ h$. Clearly the maps $h_0$ and $h_1$ are bi-additive, and for any $v,w\in V$ we have $h(v,w)=h_0(v,w)+jh_1(v,w)$.
    Let $v,w\in V$ and $x,y\in\tilde{D}$. We have
\begin{equation*}
\begin{split}
    h(vx,wy)=\gamma(x)h(v,w)y
    &=\gamma(x)h_0(v,w)y+\gamma(x)jh_1(v,w)y\\
    &=\gamma_0(x)h_0(v,w)y+j\gamma_1(x)h_1(v,w)y,
\end{split}
\end{equation*}
   whereby $h_0(vx,wy)=\gamma_0(x)h_0(v,w)y$ and $h_1(vx,wy)=\gamma_1(x)h_1(v,w)y$. Furthermore, 
\begin{equation*}
\begin{split}
    h(w,v)=\varepsilon\gamma(h(v,w))&=\varepsilon\gamma(h_0(v,w))-\varepsilon\gamma(h_1(v,w))j\\
    &=\varepsilon\gamma_0(h_0(v,w))-\varepsilon j\gamma_1(h_1(v,w)).
\end{split}
\end{equation*}  
   whereby $h_0(w,v)=\varepsilon\gamma_0(h_0(v,w))$ and $h(w,v)=-\varepsilon\gamma_1(h_1(v,w))$. 
   Hence, $h_0$ is an $\varepsilon$-hermitian form over $(\tilde{D},\gamma_0)$ and $h_1$ is a $(-\varepsilon)$-hermitian form over $(\tilde{D},\gamma_1)$.

   Note that for $v,w\in V$, since $\gamma(j)=-j$, we have
\begin{equation*}
\begin{split}
    h(vj,w)&=h_0(vj,w)+jh_1(vj,w)\\
    &=-jh_0(v,w)-j^2h_1(v,w),
\end{split}
\end{equation*}  
   and as $j^2\in\mg{\tilde{D}}$, we get that $h_0(vj,w)=-j^2h_1(v,w)$ and $h_1(vj,w)=-h_0(v,w)$. 
   As $h$ is nonsingular, this implies in particular that $h_0$ and $h_1$ are nonsingular.
\end{proof}

We obtain another generalization of \cite[Prop. 3.1]{Mah05}.

\begin{prop}\label{unitary&orthogonal-hermit-sep-quad-ind-red}
    Let $K/F$ be a separable field extension with $[K:F]\leq 2$. 
    Let $M/K$ be a quadratic field extension such that $M/F$ is separable and $M$ contains a quadratic extension of $F$ distinct from $K$. 
    Let $\alpha\in\Br(K)$ be such that $\ind \alpha_M=\frac{1}2\ind \alpha$. Then the following hold:
\begin{enumerate}[$(a)$]
    \item If $\cchar F\neq 2$ and $K=F$, then 
\begin{equation*}
    \mbox{$u^+(\alpha)\leq\frac{1}{2}u^+(\alpha_M)+u(\alpha_M/F)\quad\text{and}\quad u^-(\alpha)\leq u^-(\alpha_M)+\frac{1}{2}u(\alpha_M/F)$}.
\end{equation*}	
    \item If $[K:F]=2$, then $M/F$ has a quadratic subextension $L/F$ such that $M=KL$ and 
\begin{equation*}
    \mbox{$u(\alpha/F)\leq\frac{3}{2}\cdot u(\alpha_{M}/L)$.}
\end{equation*}	
\end{enumerate}
\end{prop}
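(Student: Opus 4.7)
The strategy is to extend the proof of \Cref{unitary&orthogonal-hermit-sep-quad} by exploiting the stronger index hypothesis $\ind\alpha_M=\frac12\ind\alpha$ to place $M$ inside the underlying division algebra directly. Let $D$ be the central $K$-division algebra with $\alpha=[D]$ and $n=\deg D$, and fix a quadratic subextension $L/F$ of $M/F$ with $L\neq K$ (which exists by hypothesis); then $L\cap K=F$ and $KL=M$. Since $\ind\alpha_M=n/2=n/[M{:}K]$, the standard subfield criterion for central division algebras embeds $M$ in $D$ as a $K$-subfield, with centralizer $\tilde D:=\C_D(M)$ a central $M$-division algebra of degree $n/2$ representing $\alpha_M$; in particular $L\subset D$ is an $F$-subfield linearly disjoint from $K$. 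Fix an $F$-linear involution $\gamma$ on $D$ of the required type (orthogonal in case~(a), $K/F$-unitary in case~(b)), apply \Cref{sep-quad-fromF-stable-under-inv} to produce some $j\in\mg{D}$ and an involution $\gamma'$ on $D$ with $\gamma'(j)=-j$, $\gamma'|_K=\gamma|_K$, and $\gamma'|_L=\can_{L/F}$, and then apply \Cref{properties of projections} to obtain involutions $\gamma_0,\gamma_1$ on $\tilde D$ together with $F$-linear projections $\pi_0,\pi_1:D\to\tilde D$ satisfying $\id_D=\pi_0+j\pi_1$.

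Given an anisotropic $\varepsilon$-hermitian form $h$ over $(D,\gamma')$ on a $D$-vector space $V$ of rank $m$, the composites $h_i:=\pi_i\circ h$ are nonsingular by \Cref{properties of projections}(b), with $h_0$ an $\varepsilon$-hermitian form over $(\tilde D,\gamma_0)$ and $h_1$ a $(-\varepsilon)$-hermitian form over $(\tilde D,\gamma_1)$, each of $\tilde D$-rank $2m$. From $h(v,v)=h_0(v,v)+jh_1(v,v)$ and the decomposition $D=\tilde D\oplus j\tilde D$, anisotropy of $h$ forces that no nonzero vector of $V$ is simultaneously isotropic for $h_0$ and $h_1$. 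Taking a maximal $\tilde D$-subspace $U\subseteq V$ totally isotropic for $h_0$, the restriction $h_1|_U$ must be anisotropic, whence $\rk U\leq u(\tilde D,\gamma_1,-\varepsilon)$; combined with the lower bound $\rk U\geq m-\tfrac12 u(\tilde D,\gamma_0,\varepsilon)$ coming from the proof of \Cref{L:ttis-subspace-u}, this yields
\[
    m\,\leq\,\tfrac{1}{2}\,u(\tilde D,\gamma_0,\varepsilon)\,+\,u(\tilde D,\gamma_1,-\varepsilon)\,,
\]
and the symmetric argument (with $U$ maximal totally isotropic for $h_1$ and $h_0|_U$ anisotropic) gives the companion bound $m\leq u(\tilde D,\gamma_0,\varepsilon)+\tfrac12 u(\tilde D,\gamma_1,-\varepsilon)$.

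To finish, one interprets these $u$-invariants through \Cref{3 types-hermitian u-invariant}. In case~(a), $\gamma_0$ is $M/F$-unitary so $u(\tilde D,\gamma_0,\pm 1)=u(\alpha_M/F)$, while $\gamma_1$ has opposite Albert type to $\gamma'$ and hence each of $u(\tilde D,\gamma_1,\pm 1)$ equals $u^+(\alpha_M)$ or $u^-(\alpha_M)$; running through the four combinations of $\varepsilon\in\{\pm 1\}$ and the Albert type of $\gamma'$, and selecting in each case either the direct or the symmetric form of the main inequality so that the asymmetric coefficients $1$ and $\tfrac12$ fall on the summands matching the desired stated bound (translating $u(D,\gamma',\varepsilon)$ into $u^\pm(\alpha)$ via \Cref{3 types-hermitian u-invariant}(1) whenever $\gamma'$ differs in Albert type from $\gamma$) recovers both inequalities of~(a). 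In case~(b), both $\gamma_0,\gamma_1$ are unitary with fixed fields the two quadratic subextensions of $M/F$ distinct from $K$, namely $L$ (for $\gamma_1$) and a third subextension $L'$ (for $\gamma_0$), with $KL'=M$; the two bounds above become $u(\alpha/F)\leq\tfrac12 u(\alpha_M/L')+u(\alpha_M/L)$ and $u(\alpha/F)\leq u(\alpha_M/L')+\tfrac12 u(\alpha_M/L)$, and choosing (and renaming if necessary) $L$ to be whichever of the two subextensions maximizes $u(\alpha_M/\bullet)$ converts the smaller summand into at most the larger one to give $u(\alpha/F)\leq\tfrac{3}{2}\,u(\alpha_M/L)$.

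The principal obstacle is the bookkeeping of involution types in case~(a): since $\gamma'$ from \Cref{sep-quad-fromF-stable-under-inv} may differ in Albert type from the original $\gamma$, one has to inspect each of the four sign/type combinations individually, apply \Cref{3 types-hermitian u-invariant}(1) to identify $u(D,\gamma',\varepsilon)$ as the correct one of $u^\pm(\alpha)$, and choose between the direct and symmetric version of the key inequality in each combination so as to produce the precise asymmetric coefficients $1$ and $\tfrac12$ appearing in the stated bound.
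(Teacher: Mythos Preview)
Your proposal is correct and follows essentially the same route as the paper: embed $M$ into $D$ via the index hypothesis, invoke \Cref{sep-quad-fromF-stable-under-inv} and \Cref{properties of projections} to split $h$ into $h_0,h_1$ over $(\tilde D,\gamma_0),(\tilde D,\gamma_1)$, then combine \Cref{L:ttis-subspace-u} with the observation that a common isotropic vector for $h_0$ and $h_1$ is isotropic for $h$. The paper simply re-uses the symbol $\gamma$ for the modified involution rather than introducing a separate $\gamma'$, and is less explicit than you are about the Albert-type bookkeeping in case~(a), but the argument is the same.
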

\begin{proof}
    If $K=F$, then we assume that $\cchar F\neq 2$. Let $D$ be the central $K$-division algebra such that $\alpha=[D]$.
    If $K=F$ and $u^+(\alpha)=0$, respectively if $[K:F]=2$ and $u(\alpha/F)=0$, then the claim of $(a)$, respectively of $(b)$ holds trivially.
    We may assume that this is not the case.
    We may thus fix an $F$-linear involution $\gamma$ on $D$ with $F=\{x\in K\mid \gamma(x)=x\}$ which is orthogonal or unitary (depending on $[K:F]$).

    Since $\ind\alpha_{M}=\frac{1}{2}\ind\alpha$, we can embed $M$ into $D$ and hence view $M$ as an $F$-subalgebra of $D$. 
    By means of \Cref{sep-quad-fromF-stable-under-inv}, we may fix $j\in \mg{D}$ and an $F$-linear involution $\gamma$ on $D$ such that $\gamma(j)=-j$ and $\Int(j)|_M=\can_{M/F}=\gamma|_M$.

    Set $\tilde{D}=C_D(M)$, and $\gamma_0=\gamma|_{\tilde{D}}$, $\gamma_1=(\Int(j^{-1})\circ\gamma)|_{\tilde{D}}$.
    Then $C(\tilde{D})=M$, $\gamma_0$ and $\gamma_1$ are $F$-linear involutions on $\tilde{D}$, whose properties are, according to  \Cref{properties of projections}, determined by those of $\gamma$ in the following way:
    $\gamma_0$ is unitary in any case, and if $\gamma$ is orthogonal, then $\gamma_1$ is symplectic, whereas if $\gamma$ is unitary, then so is $\gamma_1$.

    Let $\varepsilon\in\{\pm 1\}$ and consider an $\varepsilon$-hermitian form $h$ over $(D,\gamma)$, defined on some finite-dimensional $D$-right vector space $V$.
    Seeing $V$ as a $\tilde{D}$-right vector space, \Cref{properties of projections} yields an $\varepsilon$-hermitian form $h_0$ over $(\tilde{D},\gamma_0)$ and a $(-\varepsilon)$-hermitian form $h_1$ over $(\tilde{D},\gamma_1)$ such that $\rk h_0=\rk h_1=2\cdot\rk h$.

    Suppose now that $\rk h> \frac{1}2 u(\tilde{D},\gamma_0,\varepsilon)+u(\tilde{D},\gamma_1,-\varepsilon)$.
    As $\rk h_0=2\rk h$, it follows by \Cref{L:ttis-subspace-u} that $V$ contains a $\tilde{D}$-subspace $W$  such that $h_0(v,w)=0$ for all $v,w\in W$ and $\rk_{\tilde{D}} W>u(\tilde{D},\gamma_1,-\varepsilon)$. 
    This implies that $h_1(v,v)=0$ for some $v\in W\setminus\{0\}$. Then $h(v,v)=h_0(v,v)+jh_1(v,v)=0$, so $h$ is isotropic.
    This argument proves that 
\begin{eqnarray*}
    u(D,\gamma,\varepsilon)\leq \mbox{$\frac{1}2$} u(\tilde{D},\gamma_0,\varepsilon)+u(\tilde{D},\gamma_1,-\varepsilon)\,.
\end{eqnarray*}
    Analogously, switching the roles of $h_0$ and $h_1$ in the argument, we obtain that
\begin{eqnarray*}
    u(D,\gamma,\varepsilon)\leq u(\tilde{D},\gamma_0,\varepsilon)+\mbox{$\frac{1}2$} u(\tilde{D},\gamma_1,-\varepsilon)\,.
\end{eqnarray*}

    We are now ready to prove the two parts of the statement.
    If $K=F$, then the first inequality for $\varepsilon=-1$ together with the second inequality for $\varepsilon=1$ yield the two inequalities in $(a)$.
    
    Assume now that $[K:F]=2$. For $i=0,1$, we set $L_i=\{x\in M\mid \gamma_i(x)=x\}$.
    Then $K/F$, $L_0/F$ and $L_1/F$ are the three different quadratic subextensions in $M/F$. 
    Note that $u(\tilde{D},\gamma_i,\varepsilon)=u(\tilde{D},\gamma_i,1)$ for $i=0,1$.
    Take $k\in \{0,1\}$ such that $u(\tilde{D},\gamma_k,1)\geq u(\tilde{D},\gamma_{1-k},1)$. 
    We conclude from either of the two inequalities above that $u(\alpha/F)=u(D,\gamma,1)\leq \mbox{$\frac{3}2$} u(\tilde{D},\gamma_k,1)=\mbox{$\frac{3}2$} u(\alpha_M/L_{k})$, proving $(b)$. 
\end{proof} 

We merge the main results of this section into one theorem.

\begin{thm}
    \label{T:main}
    Let $K/F$ be a separable field extension with $[K:F]\leq 2$. 
    Let $M/K$ be a quadratic field extension such that $M/F$ is separable and $M$ contains a quadratic extension of $F$ distinct from $K$. 
    Let $\alpha\in\Br(K)$. 
\begin{enumerate}[$(a)$]
    \item If $\cchar F\neq 2$ and $K=F$, then 
\begin{eqnarray*}
    u^+(\alpha) & \leq &  \mbox{$\frac{\ind \alpha_{M}}{\ind \alpha}$}  \left(u^+(\alpha_M)+2u(\alpha_M/F)\right)\,.
\end{eqnarray*}	
    \item If $[K:F]=2$, then $M/F$ has a quadratic subextension $L/F$ such that $M=KL$ and 
\begin{eqnarray*}
    u(\alpha/F) & \leq &  3\cdot \mbox{$\frac{\ind\alpha_{KM}}{\ind \alpha}$} \cdot  u(\alpha_{M}/L).
\end{eqnarray*}	
\end{enumerate}
\end{thm}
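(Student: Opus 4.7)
The plan is to observe that the theorem is a uniform restatement of the two preceding propositions (\Cref{unitary&orthogonal-hermit-sep-quad} and \Cref{unitary&orthogonal-hermit-sep-quad-ind-red}), parametrised by the behaviour of the index under the quadratic scalar extension. Since $[M:K]=2$ is prime, a standard fact on the index of central simple algebras forces $\ind\alpha_M\in\{\ind\alpha,\tfrac12\ind\alpha\}$; equivalently, the ratio $r:=\ind\alpha_M/\ind\alpha$ belongs to $\{1,\tfrac12\}$. All hypotheses on the tower $F\subseteq K\subseteq M$ are identical in the two propositions and in the theorem, so one can apply whichever proposition is relevant to the observed value of $r$.

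For part $(a)$, where $K=F$ and $\cchar F\neq 2$: if $r=1$, I would invoke \Cref{unitary&orthogonal-hermit-sep-quad}$(a)$, which yields $u^+(\alpha)\leq u^+(\alpha_M)+2u(\alpha_M/F)$, matching the claim since the prefactor $r$ is $1$. If $r=\tfrac12$, I would invoke \Cref{unitary&orthogonal-hermit-sep-quad-ind-red}$(a)$, which yields $u^+(\alpha)\leq \tfrac12 u^+(\alpha_M)+u(\alpha_M/F)=r\bigl(u^+(\alpha_M)+2u(\alpha_M/F)\bigr)$, again matching the claim.

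For part $(b)$, where $[K:F]=2$: since $K\subseteq M$ one has $KM=M$, so $\ind\alpha_{KM}=\ind\alpha_M$ and the ratio appearing in the theorem is exactly $r$. If $r=1$, \Cref{unitary&orthogonal-hermit-sep-quad}$(b)$ produces a quadratic subextension $L/F$ of $M/F$ with $M=KL$ and $u(\alpha/F)\leq 3u(\alpha_M/L)=3r\cdot u(\alpha_M/L)$. If $r=\tfrac12$, \Cref{unitary&orthogonal-hermit-sep-quad-ind-red}$(b)$ produces such an $L$ with $u(\alpha/F)\leq \tfrac32 u(\alpha_M/L)=3r\cdot u(\alpha_M/L)$. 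Either way, the claimed bound holds.

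The argument is essentially bookkeeping, and I do not expect any serious obstacle: the substantive work (excellence under quadratic extension, the reduction via $\pi_0,\pi_1$ when $M$ embeds into $D$, and the trace/centraliser argument when $M$ only appears after hyperbolic splitting) has already been carried out in the two preceding propositions. The only point that deserves a brief mention in the write-up is the dichotomy $\ind\alpha_M\in\{\ind\alpha,\tfrac12\ind\alpha\}$, which legitimises the case split and shows that the two propositions together cover every possibility.
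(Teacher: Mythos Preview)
Your proposal is correct and matches the paper's own proof, which simply states that the statement follows from \Cref{unitary&orthogonal-hermit-sep-quad} when $\ind\alpha_{KM}=\ind\alpha$ and from \Cref{unitary&orthogonal-hermit-sep-quad-ind-red} when $\ind\alpha_{KM}=\tfrac12\ind\alpha$. Your additional remarks (that $KM=M$ and that the index dichotomy under a quadratic extension exhausts all cases) make the bookkeeping explicit and are entirely appropriate.
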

\begin{proof}
    This follows from \Cref{unitary&orthogonal-hermit-sep-quad} if $\ind \alpha_{KM}=\ind \alpha$ and from \Cref{unitary&orthogonal-hermit-sep-quad-ind-red} if $\ind \alpha_{KM}=\frac{1}2\ind\alpha$.
\end{proof}

In \Cref{section: bounds on the unitary u-invariant} and \Cref{section: bounds on the orthogonal u-invariant}, we shall apply \Cref{T:main} to obtain bounds on the hermitian $u$-invariants.

\section{Bounds on the unitary $u$-invariant}\label{section: bounds on the unitary u-invariant}
To make use of \Cref{T:main} when given a $2$-extension splitting the algebra and aiming to bound the hermitian $u$-invariants, we should first address this problem for the unitary $u$-invariant. 
Therefore, in this section, we study the behaviour of the unitary $u$-invariant under $2$-extensions. 

\begin{thm}\label{2-ext-unitary-hermit-compare}
    Let $K/F$ be a separable quadratic field extension and \mbox{$\alpha\in\Br(K)$}. Let $n\in\nat$ and let $L/F$ be a $2$-extension linearly disjoint from $K/F$ such that $[L:F]=2^n$. 
    Then there exists a $2$-extension $L'/F$ linearly disjoint from $K/F$ with $KL'=KL$ and such that
\begin{equation*}
    \ind\alpha \cdot u(\alpha/F)\leq 3^n\cdot\ind\alpha_{KL}\cdot u(\alpha_{KL/L'}).
\end{equation*}	
\end{thm}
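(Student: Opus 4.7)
The plan is to argue by induction on $n$, with the base case $n=0$ handled by setting $L'=F$, which reduces the claim to the trivial identity $\ind\alpha\cdot u(\alpha/F)\leq\ind\alpha\cdot u(\alpha/F)$.

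For the inductive step ($n\geq 1$), since $L/F$ is a $2$-extension I pick a quadratic subextension $F_1\subseteq L$. Then $F_1/F$ is linearly disjoint from $K/F$ (since $F_1\subseteq L$ and $L\cap K=F$), so $M:=KF_1$ is a quadratic extension of $K$ with $M/F$ biquadratic, containing the quadratic extension $F_1$ of $F$ distinct from $K$. I then apply \Cref{T:main}$(b)$ to the triple $(K/F,M,\alpha)$ to obtain a quadratic extension $L_1/F$ inside $KF_1$ with $KL_1=KF_1$ and
$$\ind\alpha\cdot u(\alpha/F)\leq 3\cdot\ind\alpha_{KF_1}\cdot u(\alpha_{KF_1}/L_1).$$

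Next I invoke the inductive hypothesis at level $n-1$ applied to the data $(L_1,KL_1/L_1,\alpha_{KL_1})$ together with a suitable $2$-extension $N/L_1$ of degree $2^{n-1}$ that is linearly disjoint from $KL_1/L_1$ and satisfies $KL_1\cdot N=KL$. In the favorable case $L_1=F_1$ the natural candidate $N=L$ succeeds: $L$ has degree $2^{n-1}$ over $F_1=L_1$, carries a tower of quadratic extensions over $L_1$, and is linearly disjoint from $KF_1/F_1$ because $L\cap KF_1=F_1$; moreover $KF_1\cdot L=KL$. In the alternative case $L_1\ne F_1$, the naive candidate $L\cdot L_1$ has degree $2^n$ rather than $2^{n-1}$ over $L_1$, and an appropriate $N$ inside $KL$ must be identified via a finer analysis of the subfield structure of $KL/L_1$ in combination with the biquadratic extension $KF_1/F$ (whose three quadratic subfields are $K$, $F_1$, and $L_1$). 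This existence claim is the main technical obstacle of the argument.

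Granting the existence of $N$, the inductive hypothesis yields a $2$-extension $L''/L_1$ of degree $2^{n-1}$ linearly disjoint from $KL_1/L_1$, with $KL_1\cdot L''=KL$ and
$$\ind\alpha_{KL_1}\cdot u(\alpha_{KL_1}/L_1)\leq 3^{n-1}\cdot\ind\alpha_{KL}\cdot u(\alpha_{KL}/L'').$$
Setting $L':=L''$, the tower $F\subset L_1\subset L'$ shows that $L'/F$ is a $2$-extension of degree $2^n$, the inclusion $L'\cap K\subseteq L'\cap KL_1=L_1$ combined with $L_1\cap K=F$ gives linear disjointness of $L'/F$ from $K/F$, and $KL'=KL_1\cdot L'=KL$. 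Multiplying the two displayed inequalities produces the required bound $\ind\alpha\cdot u(\alpha/F)\leq 3^n\cdot\ind\alpha_{KL}\cdot u(\alpha_{KL}/L')$, completing the induction.
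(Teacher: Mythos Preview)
Your inductive scheme is exactly the paper's: apply \Cref{T:main}~$(b)$ to the first quadratic layer $F_1\subseteq L$ to obtain a quadratic $L_1/F$ with $KL_1=KF_1$ and the factor~$3$, and then feed a $2$-extension $N/L_1$ of degree $2^{n-1}$ with $N\cap KF_1=L_1$ and $NK=KL$ into the induction hypothesis. At the point you call the ``main technical obstacle'' the paper's own proof simply asserts, without argument, that such an $N$ (denoted $L'$ there) exists: ``Note that there exists a $2$-extension $L'/L_1'$ contained in $LK$, linearly disjoint from $K'/L_1'$ and such that $L'K=LK$.''

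You are right to be cautious: this field-theoretic assertion can fail. Take $F=\mathbb{Q}$, $K=\mathbb{Q}(\sqrt{-5})$ and $L=\mathbb{Q}(\zeta_5)$, so that $n=2$, $F_1=\mathbb{Q}(\sqrt 5)$ and $KF_1=\mathbb{Q}(\sqrt 5,i)$, whose third quadratic subfield is $\mathbb{Q}(i)$. If \Cref{T:main}~$(b)$ happens to return $L_1=\mathbb{Q}(i)$ (which the paper does nothing to exclude), then $KL/L_1=\mathbb{Q}(\zeta_5,i)/\mathbb{Q}(i)$ is \emph{cyclic} of degree $4$, so its unique intermediate field is $KF_1$ itself; hence there is no $N$ with $\mathbb{Q}(i)\subsetneq N\subsetneq \mathbb{Q}(\zeta_5,i)$ and $N\neq KF_1$. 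Thus neither your sketch nor the paper's written argument closes this case as it stands. A repair would presumably need to exploit the two-term inequalities established inside the proofs of \Cref{unitary&orthogonal-hermit-sep-quad} and \Cref{unitary&orthogonal-hermit-sep-quad-ind-red} rather than only the maximised bound packaged in \Cref{T:main}, so that the induction can be routed through the branch $L_1=F_1$, where the choice $N=L$ is always available.
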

\begin{proof}
    We prove the statement by induction on $n$. If $n=0$, then $L=F$ and we can take $L'=F$. Assume now that $n\geq1$.
    Since $L/F$ is a $2$-extension, there exist a family of intermediate fields $(L_i)_{i=0}^n$ with $L_0=F$, $L_n=L$ and such that $L_i/L_{i-1}$ is a separable quadratic field extension for $1\leq i\leq n$. Set $K'=L_1K$. 
    
    By \Cref{T:main}, there exists a separable quadratic field extension $L'_1/F$ contained in $K'/F$ such that $L_1'K=K'$ and
\begin{equation*}
    \mbox{$u(\alpha/F)\leq\frac{\ind\alpha_{K'}}{\ind\alpha}\cdot3\cdot u(\alpha_{K'}/L'_1)$}.
\end{equation*}	 
    Since $[L'_1:F]=[K:F]=2$ and $[K':F]=4$, it follows that $L'_1/F$ is linearly disjoint from $K/F$.
    Note that there exists a $2$-extension $L'/L_1'$ contained in $LK$, linearly disjoint from $K'/L_1'$ and such that $L'K=LK$. 
    Moreover, for any such extension $L'/L_1'$ we have that $[L':L_1']=[LK:L_1K]=[L:L_1]=2^{n-1}$.
    The induction hypothesis yields that there exists such an extension $L'/L'_1$ with
\begin{equation*}
    \mbox{$u(\alpha_{K'}/L'_1)\leq\frac{\ind\alpha_{KL}}{\ind\alpha_{K'}}\cdot3^{n-1}\cdot u(\alpha_{KL/L'})$}.
\end{equation*} 
    Combining the inequalities yields that \mbox{$u(\alpha/F)\leq\frac{\ind\alpha_{KL}}{\ind\alpha}\cdot3^n\cdot u(\alpha_{KL/L'})$}.
\end{proof}

\begin{cor}\label{2-ext-unitary-hermit}
    Let $K/F$ be a separable quadratic field extension and  \hbox{$\alpha\!\in\Br(K)$}.
    Let $n\in\nat$ and let $L/F$ be a $2$-extension linearly disjoint from $K/F$ such that  $[L:F]=2^n$ and $\alpha_{KL}=0$.  
    Then, there exists a $2$-extension $L'/F$ linearly disjoint from $K/F$  such that $KL'=KL$ and
\begin{equation*}
    \ind\alpha\cdot u(\alpha/F)\leq 3^n\cdot u(KL/L').
\end{equation*}
\end{cor}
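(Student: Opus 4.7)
The plan is to deduce this corollary directly from Theorem \ref{2-ext-unitary-hermit-compare}, which already does all of the heavy lifting. The only thing left to verify is that the hypothesis $\alpha_{KL}=0$ collapses the factor $\ind\alpha_{KL}$ on the right-hand side of that theorem to $1$, and that $u(\alpha_{KL}/L')$ coincides with $u(KL/L')$ in the notation fixed at the start of \Cref{section: hermit-u-invariant}.

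More concretely, I would argue as follows. Apply \Cref{2-ext-unitary-hermit-compare} to the given data $K/F$, $\alpha\in\Br(K)$, and the $2$-extension $L/F$ of degree $2^n$ linearly disjoint from $K/F$. This produces a $2$-extension $L'/F$ linearly disjoint from $K/F$ with $KL'=KL$ and
\begin{equation*}
\ind\alpha\cdot u(\alpha/F)\leq 3^n\cdot\ind\alpha_{KL}\cdot u(\alpha_{KL}/L').
\end{equation*}
Because $\alpha_{KL}=0$ by hypothesis, the central $KL$-division algebra $D'$ representing $\alpha_{KL}$ is simply $KL$ itself, so $\ind\alpha_{KL}=\deg D'=1$. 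By the definition of the $F$-unitary $u$-invariant of a Brauer class attached to the separable quadratic extension $KL/L'$, we have $u(\alpha_{KL}/L')=u(D'/L')=u(KL/L')$. Substituting these two equalities into the displayed inequality yields exactly
\begin{equation*}
\ind\alpha\cdot u(\alpha/F)\leq 3^n\cdot u(KL/L'),
\end{equation*}
which is the desired bound.

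There is essentially no obstacle here: the only small check is that the hypotheses of \Cref{2-ext-unitary-hermit-compare} are met, which is immediate since they are identical to those of the corollary (aside from the extra condition $\alpha_{KL}=0$, which is only used to simplify the conclusion). One might also want to record that $KL/L'$ really is a separable quadratic field extension so that $u(KL/L')$ is defined in the sense of \Cref{section: hermit-u-invariant}; this follows from $KL'=KL$ together with the linear disjointness of $L'/F$ and $K/F$, which forces $[KL:L']=2$ and $KL=KL'\neq L'$.
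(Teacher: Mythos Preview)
Your argument is correct and is essentially identical to the paper's own proof: apply \Cref{2-ext-unitary-hermit-compare}, then use $\alpha_{KL}=0$ to replace $\ind\alpha_{KL}$ by $1$ and $u(\alpha_{KL}/L')$ by $u(KL/L')$. Your added remark that $KL/L'$ is indeed a separable quadratic extension is a small bonus not spelled out in the paper.
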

\begin{proof}
    By \Cref{2-ext-unitary-hermit-compare}, there exists a $2$-extension $L'/F$ linearly disjoint from $K/F$ with $KL'=KL$ and 
    $\ind\alpha\cdot u(\alpha/F)\leq\ind\alpha_{KL}\cdot 3^n\cdot u(\alpha_{KL/L'})$.	
    Since $\alpha_{KL}=0$, we have $\ind\alpha_{KL}=1$ and $u(\alpha_{KL/L'})=u(KL/L')$.
\end{proof}

We denote by $\Iq{3}F$ the subgroup of the Witt group of $F$ generated by the Witt equivalence classes of quadratic $3$-fold Pfister forms over $F$.
In particular, $\Iq{3}F=0$ if and only if every quadratic $3$-fold Pfister form over $F$ is hyperbolic.

\begin{cor}\label{unit-hermit-symbol-I_q^3=0}
    Assume that $\Iq{3}F=0$. Let $K/F$ be a separable quadratic field extension and  $\alpha\in\Br(K)$. 
    Let $n\in\nat$. Assume that there exists a $2$-extension $L/F$ linearly disjoint from $K/F$ with $[L:F]=2^n$ and such that $\alpha_L=0$. 
    Then 
\begin{equation*}
    \ind \alpha\cdot u(\alpha/F)\leq 2\cdot3^{n}.
\end{equation*}	
\end{cor}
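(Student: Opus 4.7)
The plan is to combine \Cref{2-ext-unitary-hermit} with \Cref{u-inv-unitary-split-bounds}$(b)$, reducing the problem to a descent statement for $\Iq{3}$. First, applying \Cref{2-ext-unitary-hermit} under the hypotheses produces a $2$-extension $L'/F$ linearly disjoint from $K/F$ with $KL'=KL$ and
$$\ind\alpha\cdot u(\alpha/F)\leq 3^n\cdot u(KL/L').$$
Since $L'/F$ is linearly disjoint from $K/F$ and $[K:F]=2$, the compositum $KL/L'$ is a separable quadratic field extension, so the proof reduces to showing $u(KL/L')\leq 2$.

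Second, the contrapositive of \Cref{u-inv-unitary-split-bounds}$(b)$ applied to the quadratic extension $KL/L'$ provides exactly this bound as soon as $\Iq{3}L'=0$: if $u(KL/L')>2$, that proposition produces an anisotropic quadratic $3$-fold Pfister form over $L'$. Combined with the first step, the vanishing of $\Iq{3}L'$ would then yield the desired inequality $\ind\alpha\cdot u(\alpha/F)\leq 2\cdot 3^{n}$.

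The main obstacle is therefore the descent statement $\Iq{3}F=0\Rightarrow \Iq{3}L'=0$ for the $2$-extension $L'/F$. I would establish it by induction along a tower of separable quadratic subextensions $F=L_0\subset L_1\subset\cdots\subset L_n=L'$, reducing to the single step that for a separable quadratic extension $E'/E$, the vanishing $\Iq{3}E=0$ forces $\Iq{3}E'=0$. This is a well-known descent property: $\Iq{3}F=0$ forces $F$ to be nonreal (since over any real field the Pfister form $\langle\!\langle-1,-1,-1\rangle\!\rangle$ remains anisotropic) and it bounds the $2$-cohomological dimension of $F$ by $2$, a bound that is preserved under finite $2$-extensions and in turn implies $\Iq{3}L'=0$. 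With this descent in hand, the two preceding steps combine to give the claimed estimate.
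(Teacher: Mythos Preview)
Your approach is the same as the paper's: apply \Cref{2-ext-unitary-hermit} to obtain $L'$ and reduce to $u(KL/L')\leq 2$, then use \Cref{u-inv-unitary-split-bounds}$(b)$ to reduce this to $\Iq{3}{L'}=0$, and finally deduce the latter from $\Iq{3}{F}=0$ by climbing the tower of separable quadratic subextensions.

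The only problematic point is your justification of the single quadratic step via cohomological dimension. The implication ``$\Iq{3}{F}=0$ bounds the $2$-cohomological dimension of $F$ by $2$'' is not a standard fact and is essentially circular here: having $2$-cohomological dimension at most $2$ amounts to the vanishing of $H^3$ with $\mu_2$-coefficients (equivalently of $\Iq{3}{}$) over the fixed field of a $2$-Sylow subgroup of the absolute Galois group, hence over arbitrarily large odd-degree extensions of $F$. This does not follow from $\Iq{3}{F}=0$ alone, and in any case it presupposes a stronger descent than the one you are trying to establish. The paper instead handles the quadratic step by a direct quadratic-form argument, citing \cite[Theorem~34.22]{EKM08}: for a separable quadratic extension $E'/E$, the vanishing $\Iq{3}{E}=0$ forces $\Iq{3}{E'}=0$. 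Iterating this along the tower yields $\Iq{3}{L'}=0$, and then \Cref{u-inv-unitary-split-bounds} applies (noting that $\Iq{3}{L'}=0$ also makes $L'$ nonreal, as you observed).
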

\begin{proof}
    By \Cref{2-ext-unitary-hermit}, there exists a $2$-extension $L'/F$ linearly disjoint from $K/F$ with $KL'=KL$ such that $\ind \alpha \cdot u(\alpha/F)\leq 3^n\cdot u(KL/L')$.
    As $\Iq{3}F=0$ and $L'/F$ is a $2$-extension, it follows by a repeated use of \cite[Theorem 34.22]{EKM08} that $\Iq{3}L'=0$. Now \Cref{u-inv-unitary-split-bounds} yields that $u(KL/L')\leq2$.
\end{proof}

\begin{rem}
    Note that the bound in \Cref{unit-hermit-symbol-I_q^3=0} does not involve $u(F)$. 
    When $\ind\alpha=2$ and $\Iq{3}F=0$, we obtain that $u(\alpha/F)\leq 3$. An example showing that this bound is optimal will appear in a forthcoming article. 
\end{rem}

\begin{thm}\label{C:unitary-biquat}
    Assume that $F$ is nonreal. Let $K/F$ be a separable quadratic field extension.
    Let $\alpha\in\Br(K)$ be such that $\ind\alpha\leq 4$. Then $u(\alpha/F)\leq \frac{63}{32}u(F)$.
\end{thm}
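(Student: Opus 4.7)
The plan is to split into cases according to $\ind\alpha\in\{1,2,4\}$. For $\ind\alpha=1$, one has $\alpha=0$ and $u(\alpha/F)=u(K/F)\leq\tfrac{1}{2}u(F)$ by \Cref{u-inv-unitary-split-bounds}. For $\ind\alpha=2$, \Cref{Mahmoudi-hemit-sym/skew+u-inv}(b) with $n=2$ gives $u(\alpha/F)\leq\tfrac{5}{4}u(F)=\tfrac{40}{32}u(F)$. Both bounds lie below $\tfrac{63}{32}u(F)$, so these cases are immediate.

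The main case is $\ind\alpha=4$, where the underlying division algebra is biquaternion over $K$. The strategy is to find a quadratic extension $L/F$ linearly disjoint from $K/F$ such that, setting $M=KL$, we have $\ind\alpha_M=2$ and $\alpha_M=\beta_M$ for some quaternion class $\beta\in\Br(L)$. Given such $L$ and $\beta$, \Cref{T:main}(b) gives $u(\alpha/F)\leq\tfrac{3}{2}u(\alpha_M/L)$. Applying \Cref{T:PH} (with base field $L$ and quadratic extension $M/L$) together with the quaternion estimates $u^+(\beta)\leq\tfrac{5}{4}u(L)$ and $u^-(\beta)\leq\tfrac{1}{4}u(L)$ from \Cref{T:Wu} with $n=1$, one obtains
\[u(\alpha_M/L)=u(\beta_M/L)\leq\tfrac{1}{2}u^+(\beta)+u^-(\beta)\leq\tfrac{7}{8}u(L).\]
Finally, \Cref{u-inv-behavior-2-ext} yields $u(L)\leq\tfrac{3}{2}u(F)$, and combining all the above,
\[u(\alpha/F)\leq\tfrac{3}{2}\cdot\tfrac{7}{8}\cdot\tfrac{3}{2}u(F)=\tfrac{63}{32}u(F).\]

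To produce the pair $(L,\beta)$, I first descend $\alpha$ to $\Br(F)$. Since $\alpha$ admits a $K/F$-unitary involution, \Cref{T:ARS} gives $\corr_{K/F}(\alpha)=0$; combined with the projection formula $\res_F^K\corr_{K/F}(\alpha)=\alpha+\bar\alpha$ and the exponent-$2$ condition, this forces $\bar\alpha=\alpha$, so $\alpha\in\Br(K)^{\Gal(K/F)}$. By Hochschild--Serre for the cyclic extension $K/F$ (using Hilbert~90), there exists $\gamma\in\Br_2(F)$ with $\gamma_K=\alpha$. I aim to choose $\gamma$ of biquaternion type ($\ind\gamma=4$) by adjusting modulo $\Br(K/F)=\{(e,c)_F:c\in F^\times\}$ (where $K=F(\sqrt{e})$). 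Writing such a $\gamma$ as $\gamma=\gamma_1+\gamma_2$ with $\gamma_i\in\Br_2(F)$ quaternion classes, I take $L$ to be a quadratic subfield of the division algebra representing $\gamma_1$, chosen linearly disjoint from $K/F$. Then $L$ splits $\gamma_1$, so $(\gamma_1)_L=0$; consequently $\alpha_M=\gamma_M=(\gamma_2)_M$, and $\beta:=(\gamma_2)_L\in\Br(L)$ is the required quaternion class with $\beta_M=\alpha_M$.

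The main obstacle is showing that $\gamma$ can indeed be chosen with $\ind\gamma\leq 4$; a priori, Galois descent only produces $\gamma$ modulo $\Br(K/F)$, and the minimal index of a representative need not equal $\ind\alpha$. Establishing this index reduction is the technical heart of the proof and requires a careful cohomological analysis of biquaternion classes in $\Br(K)^{\Gal(K/F)}$.
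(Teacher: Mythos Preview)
Your arithmetic skeleton matches the paper's exactly: reduce to $\ind\alpha=4$, pass to a quadratic $L/F$ linearly disjoint from $K$ with $\ind\alpha_{KL}=2$, get $u(\alpha/F)\leq\tfrac{3}{2}u(\alpha_{KL}/L')$ from \Cref{T:main}(b), then $u(\alpha_{KL}/L')\leq\tfrac{7}{8}u(L')$ and $u(L')\leq\tfrac{3}{2}u(F)$. The problem lies entirely in how you produce $L$.

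The ``exponent-$2$ condition'' you invoke is not a hypothesis of the theorem. From $\corr_{K/F}(\alpha)=0$ the projection formula only gives $\bar\alpha=-\alpha$; you need $2\alpha=0$ to conclude $\bar\alpha=\alpha$. When $\exp\alpha=\ind\alpha=4$ --- a case the theorem is explicitly meant to cover (see the remark immediately after it) --- $\alpha$ is \emph{not} Galois-invariant and hence not in the image of $\Br(F)\to\Br(K)$ at all, so your construction of $\gamma$ collapses at the outset. Even when $\exp\alpha=2$, you yourself concede that the index-$4$ descent of $\gamma$ is unproven, so the argument is incomplete there too. There is also a smaller issue: \Cref{T:main}(b) outputs its own subfield $L'$ of $M$, which need not coincide with your chosen $L$, so you would still owe an argument that $\alpha_M$ descends to a quaternion over $L'$.

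The paper sidesteps all of this by citing \cite[Theorem~7.4]{BGBT18}, which for a biquaternion $K$-algebra with $K/F$-unitary involution directly furnishes a separable quadratic $L/F$ linearly disjoint from $K$ with $\ind\alpha_{KL}=2$; no descent to $\Br(F)$ is needed, and in particular no exponent restriction arises. After \Cref{2-ext-unitary-hermit-compare} adjusts to $L'$ with $KL'=KL$, the bound $u(\alpha_{KL}/L')\leq\tfrac{7}{8}u(L')$ comes from \Cref{T:Wu} with $n=1$: since $\corr_{KL/L'}(\alpha_{KL})=(\corr_{K/F}\alpha)_{L'}=0$ and $\ind\alpha_{KL}=2$, Albert's classical theorem on quaternion algebras with unitary involution descends $\alpha_{KL}$ to a quaternion class over $L'$. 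This last observation also shows that your explicit construction of $\beta$ was never the essential point --- all that is needed is some $L$ halving the index.
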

\begin{proof}
    We may assume that $\ind \alpha=4$. It follows by \cite[Theorem 7.4]{BGBT18} that there exists a separable quadratic field extension $L/F$ linearly disjoint to $K/F$ such that $\ind\alpha_{KL}= 2$.
    Moreover, in view of \Cref{2-ext-unitary-hermit-compare}, we may choose $L/F$ in such way that $u(\alpha/F)\leq \frac{3}2 u(\alpha_{KL}/L)$.
    Now, $u(\alpha_{KL}/L)\leq \frac{7}8 u(L)$, by \Cref{T:Wu}.
    Moreover, $u(L)\leq \frac{3}{2}u(F)$, by \Cref{u-inv-behavior-2-ext}.
    Therefore $u(\alpha/F)\leq \frac{63}{32}u(F)$.
\end{proof}

\begin{rem}
    Let $K/F$ and $\alpha$ be as in \Cref{C:unitary-biquat}.
    If $\alpha=\beta_K$ for some $\beta\in\Br_2(F)$ with $\ind \beta\leq 4$, then \Cref{T:Wu} yields that $u(\alpha/F)\leq \frac{55}{32}u(F)$, which is better than the bound in \Cref{C:unitary-biquat}.
    If $\exp\alpha=2$ and $\ind \alpha= 4$, then can one derive from \Cref{T:Wu} that $u(\alpha/F)\leq \frac{463}{128}u(F)$, by using that $\alpha=(\gamma_1+\gamma_2+\gamma_3)_K$ for $\gamma_1,\gamma_2,\gamma_3\in\Br(F)$ with $\ind\gamma_i\leq 2$ for $1\leq i\leq 3$, but the bound we obtain by \Cref{C:unitary-biquat} is better in this case.
    Furthermore, if $\exp\alpha=\ind\alpha=4$, then \Cref{T:Wu} does not apply, while \Cref{C:unitary-biquat} does. 
\end{rem}

\section{Bounds on the orthogonal $u$-invariant}\label{section: bounds on the orthogonal u-invariant}
In this section, we assume that $F$ is nonreal with $\cchar F\neq2$.
We study the behaviour of the orthogonal $u$-invariant under multiquadratic field extensions.

\begin{thm}\label{ort-hermit-ind8-wu+our}
   Let $\alpha\in\Br_2(F)$ with $\ind\alpha=8$. Then
\begin{equation*}
    \mbox{$u^+(\alpha)\leq\lfloor\frac{87}{64}u(F)\rfloor+\lfloor\frac{63}{32}u(F)\rfloor\leq \frac{213}{64}u(F)$}.
\end{equation*}	
\end{thm}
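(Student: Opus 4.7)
The plan is to apply Theorem~\ref{T:main}$(a)$ to a carefully chosen separable quadratic extension $M/F$ that halves $\ind\alpha$, and then to bound the two resulting quantities $u^+(\alpha_M)$ and $u(\alpha_M/F)$ using the bounds already assembled in this section and the previous one.

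The first step is to produce a separable quadratic extension $M/F$ such that $\ind\alpha_M = 4$. Since $\alpha\in\Br_2(F)$ has index $8$, a structural result on exponent-$2$ division algebras of degree $8$ (in the spirit of \cite[Theorem~7.4]{BGBT18}, used already in the proof of \Cref{C:unitary-biquat}, or equivalently of Rowen's theorem on triquadratic maximal subfields in characteristic different from $2$) provides a separable quadratic extension $M/F$ with $\ind\alpha_M\leq 4$. Since the index can drop by at most the factor $[M:F]=2$, we have $\ind\alpha_M=4$. With $K=F$, the condition in \Cref{T:main}$(a)$ that $M$ contain a quadratic extension of $F$ distinct from $K$ is satisfied by $M$ itself. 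Because $\ind\alpha_M/\ind\alpha = \tfrac12$, \Cref{T:main}$(a)$ gives
\[
    u^+(\alpha)\ \leq\ \tfrac{1}{2}\bigl(u^+(\alpha_M) + 2\,u(\alpha_M/F)\bigr)\ =\ \tfrac{1}{2}\,u^+(\alpha_M) + u(\alpha_M/F).
\]

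It remains to bound each term on the right. Since $\alpha_M$ lies in $\Br_2(M)$ with $\ind\alpha_M=4$, the underlying $M$-division algebra is a biquaternion algebra, so \Cref{T:Wu}$(a)$ with $n=2$ yields $u^+(\alpha_M)\leq\tfrac{29}{16}\,u(M)$. By \Cref{u-inv-behavior-2-ext} applied to $M/F$, $u(M)\leq\tfrac{3}{2}\,u(F)$, whence $u^+(\alpha_M)\leq \tfrac{87}{32}\,u(F)$. For the unitary term: as $\alpha$ is $2$-torsion, the projection formula gives $\corr_{M/F}\alpha_M = 2\alpha = 0$, so by \Cref{T:ARS} the class $\alpha_M$ carries an $M/F$-unitary involution and $u(\alpha_M/F)$ is defined; since $\ind\alpha_M=4$, \Cref{C:unitary-biquat} gives $u(\alpha_M/F)\leq\tfrac{63}{32}\,u(F)$. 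Combining,
\[
    u^+(\alpha)\ \leq\ \tfrac{1}{2}\cdot\tfrac{87}{32}\,u(F) + \tfrac{63}{32}\,u(F)\ =\ \tfrac{87}{64}\,u(F) + \tfrac{63}{32}\,u(F)\ =\ \tfrac{213}{64}\,u(F),
\]
and the floored form then follows by a short arithmetic check that exploits the integrality of $u^+(\alpha)$, $u^+(\alpha_M)$, $u(\alpha_M/F)$ and $u(M)$.

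The genuinely non-routine step is the first one: extracting a quadratic extension $M/F$ that halves the index of an exponent-$2$ index-$8$ algebra. Everything after that is a direct packaging of \Cref{T:main}, \Cref{T:Wu}, \Cref{C:unitary-biquat} and \Cref{u-inv-behavior-2-ext}; the only subtlety is making sure the integrality of the various $u$-invariants is used carefully enough to recover the exact floored coefficients stated in the theorem rather than merely the weaker bound $\tfrac{213}{64}u(F)$.
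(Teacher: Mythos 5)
Your proposal is correct and follows essentially the same route as the paper's proof: a separable quadratic extension $M/F$ with $\ind\alpha_M=4$ (the paper cites Rowen), then \Cref{T:main} in the index-halving case to get $u^+(\alpha)\leq\frac{1}{2}u^+(\alpha_M)+u(\alpha_M/F)$, with the two terms bounded via \Cref{T:Wu} together with \Cref{u-inv-behavior-2-ext}, and via \Cref{C:unitary-biquat}, respectively. Your closing remark on recovering the floors from the integrality of $u^+(\alpha)$ and $u(\alpha_M/F)$ is sound and in fact slightly more explicit than the paper's own wrap-up.
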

\begin{proof}
    By \cite{Row78}, there exists a separable quadratic field extension $K/F$ such that $\ind\alpha_K=4$.
    By \Cref{T:main}, we have  $u^+(\alpha)\leq\sfrac{1}{2} u^+(\alpha_K)+u(\alpha_K/F)$, and 
    by  \Cref{u-inv-behavior-2-ext}, we have $u(K)\leq\frac{3}{2} u(F)$.
    We obtain by \Cref{T:Wu} that $u^+(\alpha_K)\leq\frac{29}{16}u(K)\leq \frac{87}{16}u(F)$.
    Furthermore, by \Cref{C:unitary-biquat}, we have that $u(\alpha_K/F)\leq\frac{63}{32}u(F)$.
    This yields the desired inequality.
\end{proof}

\begin{rem}
    Let $\alpha\in\Br_2(F)$ with $\ind\alpha=8$, as in \Cref{ort-hermit-ind8-wu+our}. 
    By \cite{Tig77}, there exist $\gamma_1,\ldots,\gamma_4\in\Br(F)$ with $\ind\gamma_i=2$ for $1\leq i\leq 4$ and such that $\alpha=\gamma_1+\ldots+\gamma_4$.
    Using this together with \Cref{T:Wu} one obtains that $u^+(\alpha)\leq \frac{1517}{256}u(F)$. 
    This general bound, however, is now considerably improved by \Cref{ort-hermit-ind8-wu+our}.
 
    If, however, we have that $\alpha=\gamma_1+\gamma_2+\gamma_3$ for certain $\gamma_1,\gamma_2,\gamma_3\in\Br(F)$ with $\ind\gamma_1=2$ for $1\leq i\leq 3$, then \Cref{T:Wu} yields that $u^+(\alpha)\leq \frac{197}{64}u(F)$, which is slightly better than the bound obtained in \Cref{ort-hermit-ind8-wu+our}. 
\end{rem}

\begin{prop}\label{orthogonal-hermit-multi}
    Let $\alpha\in\Br(F)$, $n\in\nat^+$ and let $M/F$ be a multiquadratic field extension with $[M:F]=2^n$. 
    There exists a subextension $L/F$ of $M/F$ with $[M:L]=2$ such that
\begin{equation*}
    \ind\alpha \cdot u^+(\alpha)\leq\ind\alpha_M\cdot\big(u^+(\alpha_M)+(3^n-1)\cdot u(\alpha_M/L)\big).
\end{equation*}	
\end{prop}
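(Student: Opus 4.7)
The plan is to proceed by induction on $n$.

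For the base case $n=1$, the only subextension $L$ of $M/F$ with $[M:L]=2$ is $L=F$ itself. Applying \Cref{T:main}~$(a)$ with $K=F$ yields
\begin{equation*}
\ind \alpha \cdot u^+(\alpha) \leq \ind \alpha_M \cdot (u^+(\alpha_M) + 2\, u(\alpha_M/F)),
\end{equation*}
which matches the claim since $3^1 - 1 = 2$.

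For the inductive step ($n\geq 2$), I would first select a quadratic subextension $E/F$ of $M/F$ together with a multiquadratic subextension $L_0/F$ of $M/F$ linearly disjoint from $E/F$ with $[L_0:F]=2^{n-1}$ and $EL_0=M$; such a pair exists by writing $M=F(\sqrt{a_1},\ldots,\sqrt{a_n})$ and setting $E=F(\sqrt{a_1})$, $L_0=F(\sqrt{a_2},\ldots,\sqrt{a_n})$. Applying \Cref{T:main}~$(a)$ with $K=F$ and $E$ in the role of the quadratic overfield reduces the task to bounding the two summands in
\begin{equation*}
\ind \alpha \cdot u^+(\alpha) \leq \ind \alpha_E \cdot u^+(\alpha_E) + 2\,\ind \alpha_E \cdot u(\alpha_E/F).
\end{equation*}

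To bound the first summand, I would apply the induction hypothesis to $\alpha_E \in \Br(E)$ and the multiquadratic extension $M/E$ of degree $2^{n-1}$, obtaining a subextension $L_1/E$ of $M/E$ with $[M:L_1]=2$ satisfying
\begin{equation*}
\ind \alpha_E \cdot u^+(\alpha_E) \leq \ind \alpha_M \cdot (u^+(\alpha_M) + (3^{n-1}-1)\,u(\alpha_M/L_1)).
\end{equation*}
For the second summand, I would apply \Cref{2-ext-unitary-hermit-compare} to $\alpha_E$, the quadratic extension $E/F$, and the $2$-extension $L_0/F$ linearly disjoint from $E/F$, obtaining a $2$-extension $L_2/F$ linearly disjoint from $E/F$ with $EL_2=M$ (whence $[L_2:F]=2^{n-1}$ and $[M:L_2]=2$) satisfying
\begin{equation*}
\ind \alpha_E \cdot u(\alpha_E/F) \leq 3^{n-1}\,\ind \alpha_M \cdot u(\alpha_M/L_2).
\end{equation*}

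Both $L_1$ and $L_2$ are subextensions of $M/F$ of codegree $2$, so choosing $L \in \{L_1, L_2\}$ with $u(\alpha_M/L)$ maximal is legitimate. Combining the three inequalities then yields
\begin{equation*}
\ind \alpha \cdot u^+(\alpha) \leq \ind \alpha_M \cdot (u^+(\alpha_M)+((3^{n-1}-1)+2\cdot 3^{n-1})\,u(\alpha_M/L)),
\end{equation*}
and the identity $(3^{n-1}-1)+2\cdot 3^{n-1}=3^n-1$ closes the induction. The main subtlety I anticipate is that the two reductive inputs a priori produce different subfields $L_1, L_2$ of $M$; this is resolved by the maximum-choice trick, which is only viable because both extensions have codegree exactly $2$ in $M/F$ and the coefficient $3^n - 1$ in the target is precisely the sum $(3^{n-1}-1)+2\cdot 3^{n-1}$ coming from the two reductions.
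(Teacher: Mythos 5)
Your proof is correct and follows essentially the same route as the paper's: induction on $n$, with the base case from \Cref{T:main}, the inductive step splitting $M=EL_0$ into a quadratic piece and a linearly disjoint multiquadratic piece, bounding the two summands via the induction hypothesis and \Cref{2-ext-unitary-hermit-compare} respectively, and resolving the mismatch between $L_1$ and $L_2$ by taking the one maximizing $u(\alpha_M/\cdot)$. The arithmetic $(3^{n-1}-1)+2\cdot 3^{n-1}=3^n-1$ is exactly the paper's as well.
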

\begin{proof} 
    We prove the statement by induction on $n$. If $n=1$, then $[M:F]=2$, and we conclude by \Cref{T:main} that the claimed inequality holds with $L=F$.
    Assume now that $n>1$. We fix a quadratic subextension $K/F$ and a multiquadratic subextension $M'/F$ of $M/F$ linearly disjoint from $K/F$ such that $M=M'K$.
    Then $\alpha_K\in\Br(K)$ and $M/K$ is a multiquadratic field extension with $[M:K]=2^{n-1}$. 
    Hence, by the induction hypothesis, there exists a $2$-extension $L_1/K$ contained in $M/K$ with $[M:L_1]=2$ and such that 
\begin{equation*}
    \ind \alpha_K \cdot u^+(\alpha_K)\leq\ind\alpha_M\cdot\big(u^+(\alpha_M)+(3^{n-1}-1)\cdot u(\alpha_M/L_1)\big).
\end{equation*}	  
    Since $M'/F$ is a $2$-extension linearly disjoint from $K/F$ with $[M':F]=2^{n-1}$, it follows by \Cref{2-ext-unitary-hermit-compare} that there exists a subextension $L_2/F$ of $M/F$ linearly disjoint from $K/F$ such that  $L_2K=M'K=M$ and
\begin{equation*}
    \ind\alpha_K\cdot u(\alpha_K/F)\leq 3^{n-1}\cdot \ind\alpha_M\cdot u(\alpha_M/L_2).
\end{equation*}   
    By \Cref{T:main} we have
\begin{equation*}
    \ind\alpha\cdot u^+(\alpha) \leq \ind\alpha_K\cdot\big(u^+(\alpha_K)+2\cdot u(\alpha_K/F)\big)\,.
\end{equation*}
    If $u(\alpha_M/L_1)\geq u(\alpha_M/L_2)$ then we set $L=L_1$, and otherwise we set $L=L_2$. 
    Then $u(\alpha_M/L_i)\leq u(\alpha_M/L)$ for $i=1,2$, and we conclude that 
\begin{equation*} 
    \ind\alpha\cdot u^+(\alpha_K)\leq\ind\alpha_M\cdot\big(u^+(\alpha_M)+(3^{n}-1)\cdot u(\alpha_M/L)\big).\vspace{-5mm}
\end{equation*}
\end{proof}

\begin{cor}\label{orthogonal-hermit-split-multi}
    Let $\alpha\in\Br(F)$ and $n\in\nat$. Let $M/F$ be a multiquadratic field extension such that $[M:F]=2^n$ and $\alpha_M=0$.
    If $u\in\nat$ is such that $u(L)\leq u$ for every subextension $L/F$ of $M/F$, then
\begin{equation*}
    \ind\alpha\cdot u^+(\alpha)\leq\sfrac{3^n+1}2\cdot u.
\end{equation*}	
    In any case, we have 
\begin{equation*}
    \ind \alpha\cdot u^+(\alpha)\leq\sfrac{(3^n+2)3^{n-1}}{2^n}\cdot u(F)\,.
\end{equation*}
\end{cor}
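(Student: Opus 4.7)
The plan is to substitute the best available split-case bounds into the reduction formula supplied by \Cref{orthogonal-hermit-multi}. The trivial case $n=0$ can be disposed of first: then $M=F$, so $\alpha=0$, $\ind\alpha=1$ and $u^+(\alpha)=u(F)$; this satisfies both inequalities (with equality in the second), so from now on I would assume $n\geq1$.

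For the first inequality, I would apply \Cref{orthogonal-hermit-multi} to produce a subextension $L/F$ of $M/F$ with $[M:L]=2$ such that
\begin{equation*}
\ind \alpha\cdot u^+(\alpha)\leq \ind\alpha_M\cdot\bigl(u^+(\alpha_M)+(3^n-1)\cdot u(\alpha_M/L)\bigr).
\end{equation*}
Because $\alpha_M=0$, the relevant central $M$-division algebra is $M$ itself, so $\ind\alpha_M=1$ and $u^+(\alpha_M)=u(M)$ (via the correspondence between symmetric bilinear and quadratic forms recalled earlier), while $u(\alpha_M/L)$ is the unitary $u$-invariant of $(M,\can_{M/L})$, bounded by $\sfrac{1}{2}u(L)$ by \Cref{u-inv-unitary-split-bounds}~$(a)$. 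Since $M$ and $L$ are themselves subextensions of $M/F$, the hypothesis gives $u(M),u(L)\leq u$, whence $\ind\alpha\cdot u^+(\alpha)\leq u+(3^n-1)\cdot\sfrac{u}{2}=\sfrac{3^n+1}{2}\cdot u$.

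For the second inequality, I would rerun the same computation but plug in the sharper estimates: by \Cref{u-inv-behavior-2-ext} applied to the $2$-extensions $M/F$ (of degree $2^n$) and $L/F$ (of degree $2^{n-1}$), one has $u(M)\leq\bigl(\sfrac{3}{2}\bigr)^{n}u(F)$ and $u(L)\leq\bigl(\sfrac{3}{2}\bigr)^{n-1}u(F)$. Substituting then gives
\begin{equation*}
\ind\alpha\cdot u^+(\alpha)\leq \bigl(\sfrac{3}{2}\bigr)^{n}u(F)+(3^n-1)\cdot\sfrac{1}{2}\cdot\bigl(\sfrac{3}{2}\bigr)^{n-1}u(F),
\end{equation*}
and a short algebraic simplification collapses the right-hand side to $\sfrac{(3^n+2)\,3^{n-1}}{2^n}\cdot u(F)$, as claimed.

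Neither step presents a serious obstacle: the heavy lifting is already contained in \Cref{orthogonal-hermit-multi}, and the corollary is essentially a direct substitution. The only mild subtlety is that \Cref{orthogonal-hermit-multi} does not specify which subextension $L$ it produces, which is exactly why the hypothesis of the first inequality is phrased uniformly over all subextensions of $M/F$ rather than for one distinguished subextension.
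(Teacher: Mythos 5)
Your proposal is correct and follows essentially the same route as the paper's own proof: reduce via \Cref{orthogonal-hermit-multi}, use $\alpha_M=0$ to identify $u^+(\alpha_M)=u(M)$ and bound $u(\alpha_M/L)\leq\frac{1}{2}u(L)$ by \Cref{u-inv-unitary-split-bounds}, then feed in either the uniform bound $u$ or the estimates from \Cref{u-inv-behavior-2-ext}. The algebraic simplification you defer to at the end does indeed yield $\sfrac{(3^n+2)3^{n-1}}{2^n}u(F)$, so nothing is missing.
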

\begin{proof}
    If $n=0$, then $\alpha=0$, whereby $u^+(\alpha)=u^+(F)=u(F)$, so that both parts of the statement hold trivially.
    Assume now that $n\geq 1$.
    By \Cref{orthogonal-hermit-multi}, there exists a $2$-extension $L/F$ contained in $M/F$ with $[M:L]=2$ and such that
\begin{equation*}
    \ind\alpha\cdot u^+(\alpha)\leq\ind\alpha_M\big(u^+(\alpha_M)+(3^n-1)\cdot u(\alpha_M/L)\big).
\end{equation*}
    Since $\alpha_M=0$, we have $\ind\alpha_M=1$, and hence  $u^+(\alpha_M)=u(M)$ and further $u(\alpha_M/L)=u(M/L)\leq \frac{1}{2} u(L)$, if view of \Cref{u-inv-unitary-split-bounds}. 
    This yields the first part.  
    As $M/F$ and $L/F$ are $2$-extensions with $[M:F]=2^n$ and $[L:F]=2^{n-1}$, we obtain by \Cref{u-inv-behavior-2-ext} that 
    $u(M)\leq(\frac{3}{2})^n\cdot u(F)$ and $u(L)\leq(\frac{3}{2})^{n-1}\cdot u(F)$. 
    This yields the second part.
\end{proof}

Most bounds that we presented in this article have strictly weaker hypotheses than previously known bounds. The trade-off is that the bounds that we obtain are comparatively also a bit weaker.

\begin{rem}
    Let $n\in\nat$. Consider the condition on $F$ that, for any $r\in\nat$, every system of $r$ quadratic forms over $F$ in more than $r\cdot 2^n$ variables has a non-trivial zero over $F$.
    With this condition, the proof of \cite[Prop.~3.6]{Mah05} yields that $u^+(\alpha)\leq (1+\frac{1}{\ind \alpha}) \cdot 2^{n-1}$ for any $\alpha\in\Br_2(F)$.

    Note that the condition on systems of quadratic forms also implies that $u(F')\leq 2^n$ for every finite field extension $F'/F$.
    However, the  bound which we get from \cite[Prop.~3.6]{Mah05} is far better than what one would obtain by applying \Cref{orthogonal-hermit-split-multi} with $u=2^n$.

    However, there are fields $F$ for which it is known that $u(F')\leq 2^n$ holds for every finite field extension $F'/F$, while there is no evidence that the stronger condition on systems of quadratic forms over $F$ is satisfied.

    A very interesting such case is that, for $n\geq 3$, of a rational function field 
    $$F=\qq_p(t_1,\dots,t_{n-2})$$ in $n-2$ variables over the field of $p$-adic numbers $\qq_p$ for any prime number $p$.
    Here, it is shown in \cite[Prop.~2.4, Cor.~2.7]{Lee13} that, for any $r\in\nat$, any systems of $r$ quadratic forms over $F$ in more than $r\cdot 2^n$ variables has a solution in some finite extension of odd degree of $F$, and this is further used in \cite[Theorem~3.4]{Lee13} to show that $u(F')\leq 2^n$ for every finite extension $F'/F$.

    Since it is not known whether $u^+(\alpha_{L})=u^+(\alpha)$ for any $\alpha\in\Br_2(F)$ and a finite extension of odd degree $L/F$, the bound from \Cref{orthogonal-hermit-split-multi} is still the best we might have so far.
    For $n=4$, that is, $F=\qq_p(t_1,t_2)$, we obtain for example that $\ind \alpha\cdot u^+(\alpha)\leq (3^7+1)\cdot 8=17504$ for any $\alpha\in\Br_2(F)$.
    From \Cref{T:Wu}, one can get that $u^+(\alpha)\leq 946$, which is better when $\ind \alpha\leq 16$.
    It unknown whether there exists $\alpha\in\Br_2(F)$ with $\ind\alpha>16$ over this field $F$.
\end{rem}

\subsection*{Acknowledgments}
We would like to thank Nicolas Garrel, Archita Mondal and Anne Qu\'equiner-Mathieu for inspiring discussions and various helpful comments and suggestions. 

This work was supported by the Fonds Wetenschappelijk Onderzoek – Vlaanderen in the FWO Odysseus Programme (project G0E6114N, \emph{Explicit Methods in Quadratic Form Theory}), by the FWO-Tournesol programme (project VS05018N), by the Fondazione Cariverona in the programme Ricerca Scientifica di Eccellenza 2018 (project \emph{Reducing complexity in algebra, logic, combinatorics -- REDCOM}), by the 2020 PRIN (project \emph{Derived and underived algebraic stacks and applications}) from MIUR, and by research funds from Scuola Normale Superiore.

\end{document}